\def\R{\mathbb R}
\numberwithin{equation}{section}
\newtheorem{theorem}{Theorem}
\newtheorem{lemma}[theorem]{Lemma}
\newtheorem{proposition}[theorem]{Proposition}
\newtheorem{corollary}[theorem]{Corollary}
\begin{document}

\title{Closing geodesics in $C^1$ topology}
\footnotetext{The author has been supported by the program ``Project ANR-07-BLAN-0361, Hamilton-Jacobi
et th\'eorie KAM faible''.}

%%%%%%%%%%%%%%%%%%%%%%%%%%%%%%%%%%%%%%%%%%%%%%%%%%%%%%%%%%%%%%%%%%%

\author{L.~Rifford\thanks{Universit\'e de Nice-Sophia
    Antipolis, Labo.\ J.-A.\ Dieudonn\'e, UMR CNRS 6621, Parc
    Valrose, 06108 Nice Cedex 02, France ({\tt
      Ludovic.Rifford@math.cnrs.fr})}}

\date{}

%%%%%%%%%%%%%%%%%%%%%%%%%%%%%%%%%%%%%%%%%%%%%%%%%%%%%%%%%%%%%%%%%%

%\makeindex

\maketitle

\begin{abstract}
Given a closed Riemannian manifold, we show how to close an orbit of the geodesic flow by a small perturbation of the metric in the $C^1$ topology.
\end{abstract}

%\tableofcontents

\section{Introduction}
\label{Intro}

Given a dynamical system and a recurrent point $x$, the Closing Problem is concerned with the existence of a nearby dynamical system with a closed orbit through $x$. The statement of the Closing Problem for vector fields in the $C^r$ topology is as follows.\\

\noindent {\bf $C^r$-Closing Problem for vector fields.} Let $M$ be a smooth compact manifold, $r\geq 0$ an integer, $X$ be a vector field of class $C^{\max \{1,r\}}$ on $M$, and $x$ be a recurrent point of $X$. Does there exist a $C^r$ vector field $Y$ arbitrary close to $X$ in the $C^r$ topology so that $x$ is a periodic point of $Y$ ? \\

The answer to the Closing Problem in the $C^0$ topology is trivially affirmative (see \cite[\textsection 1 p. 958]{pugh67a}). The Closing  Problem in the $C^1$ topology is much more difficult. In the 60's, Charles Pugh \cite{pugh67a} solved by a tour de force  the Closing Problem in the $C^1$ topology.

\begin{theorem}[$C^1$-Closing Lemma for vector fields]
Let $M$ be a smooth compact manifold. Suppose that some vector field $X$ has a nontrivial recurrent trajectory through $x\in M$ and suppose that $\mathcal{U}$ is a neighborhood of $X$ in the $C^1$ topology. Then there exists $Y\in \mathcal{U}$ such that $Y$ has a closed orbit through $x$. 
\end{theorem}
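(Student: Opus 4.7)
Using the nontriviality of the recurrence at $x$, I would choose a sequence of return times $T_n \to \infty$ with $\phi^X_{T_n}(x) \to x$ but $\phi^X_{T_n}(x) \ne x$. The goal is, for $n$ sufficiently large, to construct a $C^1$-small vector field $Z$ supported in a thin tubular neighborhood of the orbit arc $\Gamma_n = \{\phi^X_t(x) : 0 \le t \le T_n\}$ such that the perturbed field $Y = X + Z$ satisfies $\phi^Y_{T_n}(x) = x$, yielding a closed orbit of $Y$ through $x$ of period $T_n$.

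The central difficulty is the inherent weakness of $C^1$ perturbations. If $\|Z\|_{C^1} \le \epsilon$ and the support of $Z$ lies in a ball of radius $r$, then $\|Z\|_{C^0} \le \epsilon r$ and any trajectory traversing the support is deflected by at most $O(\epsilon r^2)$, which is far too small to absorb an arbitrary recurrence return $d(\phi^X_{T_n}(x), x)$ for a preassigned $r$. Hence the perturbation cannot be localized near $x$; it must be distributed along the whole length of $\Gamma_n$.

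\textbf{Key steps.} First, I would introduce flow-box coordinates in which $X$ becomes $\partial/\partial t$ on a cylinder of length $L \sim T_n$ and small radius $r$. Second, I would prove a \emph{fundamental perturbation lemma}: on such a cylinder there exist $C^1$-$\epsilon$ perturbations of $X$, supported inside it, that deflect the trajectory through the axis by any prescribed transverse vector of norm up to $c\epsilon r L$. The mechanism is that a long, cone-shaped support allows transverse derivatives to remain of size $\epsilon$ while the lateral drift, accumulated over length $L$, grows linearly in $L$ rather than in $r$. Third, I would glue back to $M$ via charts, and for $n$ large enough that $d(\phi^X_{T_n}(x), x) \ll \epsilon r T_n$, apply the lemma to steer the orbit of $x$ exactly onto itself; exactness of the closing is then ensured by an implicit function argument on a transverse Poincar\'e section.

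\textbf{Main obstacle.} The heart of the argument is the fundamental perturbation lemma, whose delicacy is not only in producing a drift of order $\epsilon r L$ but in doing so while (i) keeping the support disjoint from the forward orbit of $x$ after it has exited the cylinder, so that later passes do not undo the shift, and (ii) keeping the dependence of the exit point on the initial point Lipschitz, so that the implicit function step succeeds. A secondary but essential obstacle is arranging the tubular neighborhood of $\Gamma_n$ to remain embedded despite $T_n \to \infty$: here the nontriviality of the recurrence is what lets the tube be both long and thin without self-intersection, reconciling the two conflicting demands of length (to accumulate drift) and thinness (to control the $C^1$ norm).
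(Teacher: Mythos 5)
The paper does not prove this theorem; it is Pugh's classical $C^1$-Closing Lemma, quoted from \cite{pugh67a} as background for the geodesic-flow problem, so there is no proof of the author's to compare with. I can only assess your plan on its own terms.

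Your initial diagnosis is correct --- a $C^1$-small push localized near $x$ moves trajectories by $O(\epsilon r^2)$, far too little, so the perturbation must be distributed --- but the ``fundamental perturbation lemma'' as you state it cannot be carried out, and its failure is precisely the obstruction that makes Pugh's theorem deep. The tube you want around the arc $\Gamma_n = \phi^X_{[0,T_n]}(x)$ cannot be taken of radius $r$ independent of $n$: since $\phi^X_{T_n}(x) \to x$, the arc returns within distance $d_n := d\bigl(\phi^X_{T_n}(x),x\bigr) \to 0$ of its start, and a recurrent orbit typically re-enters many intermediate cross-sections of itself as well, so the largest admissible embedded tube radius $r(T_n)$ shrinks as $T_n$ grows. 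The quantity you must dominate, $d_n$, and the drift budget $\epsilon\, r(T_n)\, T_n$ are then both governed by the same recurrence, and there is no a priori reason the latter beats the former; nontriviality of the recurrence ($\phi^X_{T_n}(x) \ne x$) only gives some $r(T_n)>0$ for each fixed $n$, with no uniform lower bound and no control on the ratio. Your requirement (i), that later passes of the orbit avoid the support of $Z$, is likewise generally unachievable for a recurrent orbit and would have to be replaced by a genuinely delicate compatibility analysis. Pugh's actual resolution is structurally different: one does not push along the entire arc, but works inside a single small flow-box near $x$, records the many successive crossings of the orbit through that box, and invokes a nontrivial selection result (Pugh's ``lift axiom'' / distribution lemma, later streamlined by Liao and Ma\~{n}\'{e}) to pick out a chain of crossings that can be linked by disjoint, uniformly $C^1$-small pushes, with constants depending only on the dimension. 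Without a selection mechanism of that kind, the scheme you outline does not close.
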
 

Since then, the Pugh $C^1$-Closing Lemma has been developed in several directions. Pugh himself \cite{pugh67b} extended it to the case of nonwandering points for vector fields, diffeomorphisms and flows. Then, in the 80's, Charles Pugh and Clark Robinson \cite{pr83} studied the Closing Problem for conservative dynamical systems such as the Hamiltonian systems. 

\begin{theorem}[Closing Lemma for Hamiltonian vector fields in the $C^2$ topology]
\label{THMpr}
Let $(N,\omega)$ be a symplectic manifold of dimension $2n\geq 2$ and $H : N \rightarrow \R$ be a given Hamiltonian of class $C^2$. Let $X$ be the Hamiltonian vector field associated with $H$ and $\phi^H$ the Hamiltonian flow. Suppose that $X$ has a nontrivial recurrent trajectory through $x\in N$ and suppose that $\mathcal{U}$ is a neighborhood of $X$ in the $C^1$ topology. Then there exists $Y\in \mathcal{U}$ such that $Y$ is a Hamiltonian vector field and $Y$ has a closed orbit through $x$. 
\end{theorem}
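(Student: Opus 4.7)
The plan is to use the nontrivial recurrence of $x$ to produce a long orbit arc of $X_H$ returning close to $x$, then close the orbit by a Hamiltonian perturbation supported in a small flow box placed along the arc but disjoint from its endpoints. Because $X_H = \omega^{-1}(dH)$, a $C^2$-small perturbation of the Hamiltonian yields a $C^1$-small perturbation of the vector field, so it suffices to produce $\tilde H = H + \eta$ with $\|\eta\|_{C^2}$ arbitrarily small.

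Nontrivial recurrence provides times $T_n \to +\infty$ with $x_n := \phi^H_{T_n}(x) \to x$, $x_n \neq x$. Pick a point $y$ on the orbit arc $\{\phi^H_t(x) : 0 < t < T_n\}$ chosen close to $x$ (but disjoint from it), so that the flow time from $y$ to $x_n$ is short and the linearised flow on that remaining segment has bounded norm. Applying Darboux together with the symplectic flow-box theorem, choose canonical coordinates $(q,p) = (q_1,\ldots,q_n,p_1,\ldots,p_n)$ on a neighborhood $V$ of $y$ such that $\omega = \sum dp_i \wedge dq_i$, $H = H(y) + p_1$, and the orbit arc crosses $V$ only once. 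Let $e$ and $f$ denote the entry and exit points of the orbit in $V$; by smooth dependence on initial conditions, there exists $\delta$ with $\|\delta\| \leq C\|x - x_n\|$ such that flowing $X_H$ forward from $f + \delta$ reaches $x$ in the same time that $f$ reaches $x_n$. It therefore suffices to produce a $C^2$-small $\eta$ compactly supported in $V$ such that the flow of $X_{H + \eta}$ sends $e$ to $f + \delta$ over the nominal transit time.

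Take $\eta(q,p) = \chi(q_1)\,L(q_2,\ldots,q_n,p_2,\ldots,p_n)$ with $\chi \in C_c^{\infty}$ satisfying $\int \chi = 1$ and $L$ a linear form. A direct computation, using that the unperturbed flow is $q_1 \mapsto q_1 + t$, shows that to first order the transverse displacement between the perturbed and unperturbed exit points equals the constant Hamiltonian vector field $X_L = \omega^{-1}(dL)$ in the transverse symplectic directions. Every vector $v$ in the $(2n-1)$-dimensional transverse plane arises as such an $X_L$ for the linear generator $L(q') := \omega(v, q')$, so one can match the target displacement $\delta$; the resulting $\|\eta\|_{C^2}$ is then proportional to $\|\delta\|$ and hence to $\|x - x_n\|$, which can be made arbitrarily small. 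A standard implicit function argument upgrades this first-order solution to an exact closing Hamiltonian perturbation producing a closed orbit of $X_{\tilde H}$ through $x$.

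The main obstacle is that Pugh's classical lift-and-shift construction is not symplectic: one cannot freely push a single orbit by an arbitrary $C^1$-small vector field and remain Hamiltonian. A further subtlety is the placement of $V$, which must be close enough to $x$ that the linearised flow from $V$ to $x_n$ has bounded norm (otherwise $\|\delta\|$ would be much larger than $\|x - x_n\|$), yet strictly disjoint from $x$ and from the other crossings of the orbit arc. Once $V$ is so chosen, the key observation behind Pugh--Robinson is that every infinitesimal transverse displacement at $e$ is realised by a suitable linear Hamiltonian with $C^2$ cost controlled linearly by the displacement; making the resulting implicit function scheme uniform in $n$ is the technical heart of the argument.
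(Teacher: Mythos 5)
The paper does not prove Theorem~\ref{THMpr}: it quotes it as a known result of Pugh and Robinson (reference \cite{pr83}) and uses it only as motivation and context for the geodesic-flow version. So there is no in-paper proof to compare against, and your attempt must be judged on its own.

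Your sketch correctly identifies the right local ingredients: the symplectic flow-box normal form $H = H(y) + p_1$ near a regular point $y$, perturbations of the form $\eta(q,p)=\chi(q_1)\,L(q_2,\ldots,q_n,p_2,\ldots,p_n)$, the first-order computation that the net transverse displacement is the constant field $X_L$, and an implicit-function-theorem finish. But there is a gap which is in fact the entire difficulty of any closing lemma, Hamiltonian or not, and your proposal waves it away in one phrase: ``the orbit arc crosses $V$ only once.'' This cannot be arranged. The arc $\{\phi^H_t(x): 0\le t\le T_n\}$ has time-length $T_n\to\infty$, and for recurrence to bring $x_n$ close to $x$, the arc generically threads through any small neighbourhood of $y$ an unbounded number of times; shrinking $V$ does not help, since the recurrent arc comes back arbitrarily close to itself. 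A perturbation supported in $V$ that moves the intended entry $e$ to $f+\delta$ also moves every other crossing of the arc through $V$ by a comparable amount, so the orbit you were trying to close is destroyed elsewhere. Handling the interaction between the target crossing and the many parasitic crossings is precisely what makes Pugh's argument the celebrated ``tour de force'' cited in this paper (his lift axiom, the careful selection and rescaling of boxes, and an inductive scheme for distributing the shift across several small perturbations), and what Pugh and Robinson must redo subject to the symplectic constraint. None of this appears in your sketch. A secondary issue is a dimension count: linear forms $L$ on the transverse symplectic plane $(q',p')$ give $X_L$ ranging only over a $(2n-2)$-dimensional space, not the full $(2n-1)$-dimensional transversal you assert; the missing $\partial_{p_1}$ direction is tied to the energy level of the perturbed Hamiltonian and must be accounted for separately (e.g.\ via the conserved quantity $\tilde H$ and a free return-time parameter), which should be made explicit rather than absorbed into ``every vector $v$ arises as such an $X_L$.''
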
 

Note that a perturbation of the Hamiltonian in the $C^2$ topology induces a perturbation of the associated Hamiltonian vector field in the $C^1$ topology only. We refer the reader to the exhaustive memoir \cite{arnaud98} of Marie-Claude Arnaud for a detailed presentation and proofs of various versions of the closing lemma as well as comments on the Closing Problem in the $C^2$ topology (almost nothing is known in that case). Knowing the Pugh-Robinson Closing Lemma for Hamiltonian vector fields (they prove actually Theorem \ref{THMpr} for nonwandering points), it is natural to ask what happens for geodesics flows. \\

\noindent {\bf $C^r$-Closing Problem for geodesic flows.}  Let $(M,g)$ be a smooth compact manifold, $r\geq 0$ an integer, and $(x,v)$ be fixed in the unit tangent bundle $U^gM$. If $(x,v)$ is recurrent with respect to the geodesic flow of $g$, do there exist smooth metrics arbitrary close to $g$ in the $C^r$ topology so that the unit speed geodesic starting at $x$ with initial velocity $v$ is periodic ?\\

For that problem, nothing is known. Even the $C^0$-Closing Lemma for geodesic flows is unproved (see \cite[\textsection 10 p. 309]{pr83}). Let us explain why in few words. A geodesic flow may indeed be viewed as an Hamiltonian flow on the cotangent bundle $N=T^*M$ equipped with the canonical symplectic form. Given a smooth Riemannian metric $g$, we may define a smooth  Hamiltonian $H:T^*M \rightarrow \R$ by (in local coordinates)
$$
H(x,p) = \frac{1}{2} \left(\|p\|_x^*\right)^2 \qquad \forall (x,p) \in T^*M,
$$
where $\|\cdot \|^*$ denotes the dual metric on $T^*M$. In that way, the Closing Problem for geodesic flows becomes a Closing Problem for Hamiltonian vector fields with a specific type of perturbation. As a matter of fact, a perturbation of a given metric in a small neighborhood $\Omega$ of some $x\in M$ induces a perturbation of the associated Hamiltonian in all the fibers $T_y^*M$ with $y\in \Omega$. However, in Theorem \ref{THMpr}, one allows perturbations of the Hamiltonian in both variables. In other words, in contrast to Theorem \ref{THMpr},  the perturbations allowed in the Closing Problem for geodesic flows cannot be localized in the phase space $T^*M$ but only in $M$. 

The aim of the present paper is to prove a closing lemma for geodesic flows in the $C^1$ topology on the metric, that is in the $C^0$ topology for the associated dynamics. To state the result, let us make clear the notations which will be used throughout the paper.

Let $M$ be a smooth compact manifold without boundary of dimension $n\geq 2$ (throughout the paper, smooth always means of class $C^{\infty}$). For every Riemannian metric $g$ on $M$ of class $C^k$ with $k\geq 2$, denote by $|v|_x^g$ the norm of a vector $v \in T_xM$, by $U^gM$ the unit tangent bundle, and by $\phi_t^g$ the geodesic flow on $U^gM$. Moreover, for every $(x,v) \in U^gM$, denote by  $\gamma_{x,v}^g : \R \rightarrow M$ the unit speed geodesic starting at $x$ with initial velocity $v$. The aim of the present paper is to show how to close an orbit of the geodesic flow with a small conformal perturbation of the metric in the $C^1$ topology. Pick a Riemannian distance on $TM$ and denote by $d_{TM} (\cdot,\cdot)$ the geodesic distance associated to it on $TM$. Note that since all Riemannian distances are Lipschitz equivalent on compact subsets, the choice of the metric on $TM$ is not important.  Our main result is the following:

\begin{theorem}\label{THM}
Let $g$ be a Riemannian metric on $M$ of class $C^k$ with $k\geq 3$ (resp. $k=\infty$), $(x,v)\in U^gM$   and $\epsilon >0$ be fixed. Then there exist a metric $\tilde{g} = e^f g$ with $f:M \rightarrow \R$ of class $C^{k-1}$ (resp. $C^{\infty}$) satisfying $ \| f \|_{C^1}<\epsilon$, and $\bigl(\tilde{x},\tilde{v}\bigr) \in U^{\tilde{g}}M$ with $d_{TM}\bigl(x,v),(\tilde{x},\tilde{v})\bigr) < \epsilon$, such that the geodesic $\gamma_{(\tilde{x},\tilde{v})}^{\tilde{g}}$ is periodic.
\end{theorem}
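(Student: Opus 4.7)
First, I would reduce to the case where $(x,v)$ is $\phi^g$-recurrent. The Liouville measure on $U^gM$ is $\phi^g$-invariant and finite, so by Poincar\'e recurrence almost every point of $U^gM$ is recurrent; hence $(x,v)$ may be replaced, at the cost of $\epsilon/2$ in $d_{TM}$, by a recurrent point. Then fix a local codimension-one symplectic transverse section $\Sigma\subset U^gM$ to the geodesic flow at $(x,v)$. Recurrence supplies arbitrarily large return times $T>0$ for which $(x_T,v_T):=\phi_T^g(x,v)\in\Sigma$ lies as close to $(x,v)$ as desired. The theorem is thus reduced to finding $f$ of class $C^{k-1}$ with $\|f\|_{C^1}<\epsilon$, supported in a small ball $\Omega\subset M$ disjoint from a neighborhood of $x$, together with $(\tilde x,\tilde v)\in U^{\tilde g}M$ near $(x,v)$, such that the $\tilde g=e^fg$-orbit of $(\tilde x,\tilde v)$ returns to itself at time close to $T$.

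The core is a quantitative perturbation lemma. The Hamiltonian of $\tilde g$ on $T^*M$ is $\tilde H(x,p)=\tfrac12 e^{-f(x)}\|p\|_x^{*2}$, whose Hamiltonian vector field differs from $X_H$, to first order in $f$, by
$$
X_{\delta H}(x,p)\;=\;\bigl(-f(x)\,g^{-1}p,\ \tfrac12\|p\|_x^{*2}\,df(x)\bigr).
$$
Pick an interior point $y=\gamma_{x,v}^g(t_0)$ of the geodesic segment and a small neighborhood $\Omega$ of $y$. By Duhamel, the first-order displacement of the Poincar\'e return reads $\delta(x_T,v_T)=\int_0^T d\phi_{T-s}^g\bigl(X_{\delta H}(\phi_s^g(x,v))\bigr)\,ds$, defining (after projection to $\Sigma$) a linear map $L_\Omega:C_c^\infty(\Omega)\to T_{(x,v)}\Sigma$. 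The perturbation lemma states that $L_\Omega$ is quantitatively surjective: there exists $c>0$ (depending only on $g$, $\Sigma$, and the geometry of $\Omega$) such that every $\xi\in T_{(x,v)}\Sigma$ with $\|\xi\|\le c\epsilon$ equals $L_\Omega(f)$ for some $f$ with $\|f\|_{C^1}<\epsilon$. To prove it, I would construct $2n-2$ independent smooth bumps supported in $\Omega$ whose images under $L_\Omega$ form a basis of $T_{(x,v)}\Sigma$, exploiting the explicit form of $X_{\delta H}$ together with the spread of $d\phi_{T-s}^g$ described by Jacobi fields along the geodesic.

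Granted the lemma, the theorem follows by a standard fixed-point step: choose $T$ so large that $(x_T,v_T)$ lies within distance $c\epsilon/2$ of $(x,v)$ in $\Sigma$; the displacement required to close the orbit then lies in the achievable ball. An inverse-function-theorem or Brouwer degree argument applied to the nonlinear map $f\mapsto(\text{Poincar\'e return on }\Sigma)$ converts the linear surjection $L_\Omega$ into a genuine $f$ with $\|f\|_{C^1}<\epsilon$ and a base point $(\tilde x,\tilde v)\in\Sigma$ near $(x,v)$ whose $\tilde g$-orbit is periodic. The regularity $f\in C^{k-1}$ (resp.\ $C^\infty$) is inherited from that of $g$ via the smooth dependence of ODE solutions on parameters.

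The main obstacle is precisely the perturbation lemma. Unlike Pugh-Robinson, where arbitrary Hamiltonian perturbations are available, conformal changes produce only the restricted family $\delta H=-\tfrac12 f(x)\|p\|^{*2}$, in which the $x$- and $p$-components of $X_{\delta H}$ are coupled through $f$ and $df$; on the energy level, $\delta H$ depends only on $x$ via $f(x)$. Proving that this constrained family still generates all $2n-2$ transverse directions in $\Sigma$ --- with control of $\|f\|_{C^1}$ rather than just $\|f\|_{C^0}$, and with support inside a small ball kept far from $(x,v)$ --- is where the real geometric work lies, and where the delicate analysis of Jacobi fields together with the choice of bump profiles must be carried out.
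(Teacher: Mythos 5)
Your route is genuinely different from the paper's. You linearize the Poincar\'e return via Duhamel, seek quantitative surjectivity of the induced linear map $L_\Omega$ from conformal factors supported in $\Omega$ to $T_{(x,v)}\Sigma$, and then invoke an inverse-function or degree argument; this is the Pugh--Robinson template. The paper proceeds constructively and never linearizes: near the recurrent return it interpolates $\bar\gamma_{x,v}$ and $\bar\gamma_{y,w}$ with a bump, reparametrizes the interpolation to unit speed, reads off the control $\tilde u(\cdot)$ the perturbed Hamiltonian must supply (so that the interpolated curve solves (\ref{systildexp})), and realizes this control as $\nabla f(\tilde x(t))=\tilde u(t)$ by an explicit formula (Lemma \ref{easylemma}), yielding an \emph{exact} connection $\phi^{\tilde g}_{\tilde\tau}(x,v)=\phi^{\bar g}_\tau(y,w)$ with no fixed-point step. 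Note also that both routes ultimately need a perturbation tube $\mathcal{R}(\rho)$ of \emph{fixed} length along the geodesic rather than a vanishingly small ball: at a single instant $s$ the effective kick available from a conformal factor is only the $(n-1)$-dimensional transverse part of $\nabla f(\gamma(s))$, and one needs the spread of $d\phi^{\bar g}_{T-s}$ over a time interval of definite length to generate all $2n-2$ directions in $\Sigma$, which is why your constant $c$ degenerates as $\Omega$ shrinks.

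The genuine gap is that you say nothing about re-entries, and this is exactly where the hard geometry lives. For the long return time $T$ required to bring $\phi^g_T(x,v)$ within $c\epsilon/2$ of $(x,v)$, the orbit $\gamma_{x,v}([0,T])$ will in general pass through $\Omega$ many times. Then the Duhamel integral accumulates contributions from every pass, so the surjectivity you need is that of the full multi-pass operator, whose contributions may interfere destructively --- the single-pass Jacobi-field analysis you sketch does not control it --- and the nonlinear fixed-point step must simultaneously control the effect of $f$ on all passes, which feed back into the return map. Shrinking $\Omega$ to avoid re-entries shrinks $c$, which forces a longer $T$ and hence more re-entries: this is the classical circularity of the $C^1$ closing problem. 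The paper breaks it with an idea absent from your proposal: a conformal factor whose gradient is \emph{collinear with the velocity} along a $\bar g$-geodesic leaves that geodesic invariant up to reparametrization (Lemma \ref{LEMreparametrization}). The paper therefore enumerates the re-entries as obstacle arcs $\bar c_1,\dots,\bar c_L$, perturbs the interpolating curve to be transverse to them and to miss the places where the control is active (Lemma \ref{LEMTHOM}), and builds $f$ with both $\nabla f(\tilde x(t))=\tilde u(t)$ and $\nabla f(\bar c_l(s))\parallel \dot{\bar c}_l(s)$ (Lemma \ref{LEMf}), so every other pass stays a geodesic and the closing is exact. Without a mechanism that neutralizes the re-entries, your perturbation lemma is unproved and your fixed-point argument does not close.
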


The idea of our proof is first to observe that thanks to the Poincar\'e recurrence  theorem, the geodesic flow is nonwandering on $U^gM$. Then we perform the construction of a connecting metric which preserves the transverse pieces of the geodesics crossing the box. This is done thanks to Lemma 5.\\

There is a constant $C>0$ such that if $(x,v) , \bigl(\tilde{x},\tilde{v}\bigr) \in TM$ satisfy $(x,v) \in U^gM$ and $d_{TM}\bigl(x,v),(\tilde{x},\tilde{v})\bigr) < \epsilon$ with $\epsilon >0$ small enough, then there is a smooth diffeomorphism $\Phi : M \rightarrow M$ such that 
$$
\Phi(x) = \Phi(\tilde{x}), \quad d \Phi (x,v) = \bigl( \tilde{x}, \tilde{v}\bigr), \quad \mbox{ and } \quad \|\Phi - Id\|_{C^2} < C \epsilon.
$$
Therefore, the following result is an easy consequence of Theorem \ref{THM}:

\begin{corollary}\label{COR}
Let $g$ be a Riemannian metric on $M$ of class $C^k$ with $k\geq 3$ (resp.  $k=\infty$), $(x,v)\in U^gM$ and $\epsilon >0$ be fixed.  Then there exists a metric $\tilde{g} $ of class $C^{k-1}$ (resp. $C^{\infty}$) with $\left\|\tilde{g} -g  \right\|_{C^1}<\epsilon$ such that the geodesic $\gamma_{(x,v)}^{\tilde{g}}$ is periodic.
\end{corollary}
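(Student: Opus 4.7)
\textbf{Proof plan for Corollary \ref{COR}.} The plan is to apply Theorem~\ref{THM} to obtain a conformal perturbation $\tilde{g}_1$ whose geodesic flow has a closed orbit through a nearby initial condition $(\tilde{x},\tilde{v})$, and then use the diffeomorphism $\Phi$ stated just before the corollary to pull this metric back so that the closed orbit passes through the originally prescribed $(x,v)$.

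Given $\epsilon>0$, I would first choose an auxiliary $\epsilon'>0$, to be determined, and apply Theorem~\ref{THM} with parameter $\epsilon'$. This yields a metric $\tilde{g}_1 = e^f g$ of class $C^{k-1}$ (resp.\ $C^\infty$) with $\|f\|_{C^1} < \epsilon'$, and a point $(\tilde{x},\tilde{v}) \in U^{\tilde{g}_1}M$ with $d_{TM}((x,v),(\tilde{x},\tilde{v})) < \epsilon'$, such that $\gamma_{(\tilde{x},\tilde{v})}^{\tilde{g}_1}$ is periodic. Next, I would invoke the preparatory lemma quoted before the corollary to produce a smooth diffeomorphism $\Phi:M\to M$ with $\Phi(x)=\tilde{x}$, $d\Phi_x(v)=\tilde{v}$, and $\|\Phi-\mathrm{Id}\|_{C^2}<C\epsilon'$.

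Setting $\tilde{g} := \Phi^* \tilde{g}_1$, the map $\Phi$ becomes an isometry from $(M,\tilde{g})$ to $(M,\tilde{g}_1)$ and therefore carries $\tilde{g}$-geodesics to $\tilde{g}_1$-geodesics while preserving speed and period. Since $\Phi$ sends $(x,v)$ to $(\tilde{x},\tilde{v})$, we obtain $\gamma_{(x,v)}^{\tilde{g}} = \Phi^{-1}\circ \gamma_{(\tilde{x},\tilde{v})}^{\tilde{g}_1}$, which is a periodic, unit-speed geodesic of $\tilde{g}$ through $(x,v)$, as required. The regularity of $\tilde{g}$ is $C^{k-1}$ (resp.\ $C^\infty$), inherited from $\tilde{g}_1$ since $\Phi$ is smooth.

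The only remaining task is the $C^1$ bound on $\tilde{g}-g$. Writing
\[
\tilde{g} - g \;=\; \bigl(e^{f\circ\Phi}-1\bigr)\,\Phi^* g \;+\; \bigl(\Phi^* g - g\bigr),
\]
the first term is controlled in $C^1$ by a constant multiple of $\|f\|_{C^1}\,\|g\|_{C^1} \le C_1\epsilon'$, using that $\Phi$ is $C^2$-close to the identity; the second term is bounded by $C_2\|\Phi-\mathrm{Id}\|_{C^2}\|g\|_{C^2} \le C_3\epsilon'$, since $C^1$ control of the components of $\Phi^*g-g$ requires exactly $C^2$ control of $\Phi$. Choosing $\epsilon'$ small enough in terms of $\epsilon$, $\|g\|_{C^2}$, and the constant $C$ of the diffeomorphism lemma therefore gives $\|\tilde{g}-g\|_{C^1}<\epsilon$. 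Since Theorem~\ref{THM} and the diffeomorphism lemma are taken as granted, there is no genuine obstacle here; the only point requiring care is the $C^1$ estimate for the pullback, and it is precisely to make this estimate work that the diffeomorphism lemma is stated in the $C^2$ norm on $\Phi$.
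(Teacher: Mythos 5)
Your proof is correct and is exactly the argument the paper has in mind: the paper gives no explicit proof, stating only that the corollary is ``an easy consequence'' of Theorem~\ref{THM} after introducing the diffeomorphism $\Phi$ with $\|\Phi-\mathrm{Id}\|_{C^2}<C\epsilon$ (the paper's ``$\Phi(x)=\Phi(\tilde x)$'' is a typo for $\Phi(x)=\tilde x$, which you correctly interpreted). You have filled in precisely the intended details --- pulling back $\tilde g_1 = e^f g$ by $\Phi$, using the isometry to transport the closed geodesic to $(x,v)$, and controlling $\tilde g - g$ in $C^1$ via the decomposition $(e^{f\circ\Phi}-1)\,\Phi^*g + (\Phi^*g-g)$, which is where the $C^2$ bound on $\Phi$ and the hypothesis $k\geq 3$ (so $\|g\|_{C^2}<\infty$) are needed.
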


The Pugh $C^1$-Closing Lemma has strong consequences on the structure of the flow of generic vector fields (see \cite[\textsection 1 p. 1010]{pugh67b}). It is worth noticing that our result is not striking enough to infer relevant properties for generic geodesic flows (for instance, the existence of an hyperbolic periodic orbit is not stable under $C^0$ perturbations on the dynamics). Such interesting properties would follow from the following conjecture which is tempting in view of Pugh's Closing Lemma. (We refer the reader to \cite{contrerasicm} and references therein for known generic properties of geodesic flows in the $C^2$ topology.)\\

\noindent {\bf Conjecture.}  Let $(M,g)$ be a smooth compact manifold and $(x,v)$ be fixed in the unit tangent bundle $U^gM$. There exist smooth metrics arbitrary close to $g$ in the $C^2$ topology so that the unit speed geodesic starting at $x$ with initial velocity $v$ is periodic.\\

In 1951, Lyusternik and Fet proved that at least one closed geodesic exists on every smooth compact Riemannian manifold (see \cite{klingenberg,lf51}). Our Corollary \ref{COR} shows that any pair $(x,v) \in U^gM$ may indeed be seen as a pair $\bigl(\gamma_k(0),\dot{\gamma}_k(0)\bigr)$ for some sequence of closed orbits $\{\gamma_k\}$  with respect to smooth Riemannian metrics $\{g_k\}$ converging to $g$ in the $C^1$ topology.  \\
 
\noindent  The paper is organized as follows: In Section \ref{Sconnecting}, we state and prove a result which is crucial to the proof of Theorem \ref{THM}. This result, Proposition \ref{PROPconnect}, shows how to connect two close geodesics while preserving a finite set of transverse geodesics, by a conformal perturbation of the initial metric with control on the support of the conformal factor and on its $C^1$ norm. Then, the proof of Theorem \ref{THM} is given in Section \ref{PROOFTHM} and the proofs of some technical results are postponed to the appendix. \\
 
 \noindent Notations: Throughout this paper, we denote by $\langle \cdot, \cdot\rangle$ the Euclidean inner product and by $|\cdot|$ the Euclidean norm in $\R^k$, and for any $x\in \R^k$ and any $r\geq 0$, we set $B^k(x,r):= \{y\in \R^k: |y-x|<r\}$. \\
 
\noindent {{\it Acknowledgements}: We are grateful to two anonymous referees for helpful remarks and suggestions.

\section{Connecting geodesics with obstacles}\label{Sconnecting}

\subsection{Statement of the result}\label{subconnecting}

Let $n\geq 2$ be an integer, $\tau >0$ be fixed, and let $\bar{g}$ be a complete Riemannian metric of class $C^k$ with $k\geq 3$ or $k=\infty$ on $\R^n$. Denote by $|v|_x^{\bar{g}}$ the norm with respect to $\bar{g}$ of a vector $(x,v) \in T\R^n = \R^n \times \R^n$, denote by  $\phi^{\bar{g}}_t$ the geodesic flow of $\bar{g}$ on $\R^n \times \R^n$ and for every $( x, v) \in \R^n \times \R^n$, denote by $\bar{\gamma}_{x,v}$ the geodesic with respect to $\bar{g}$ which starts at $x$ with velocity $v$. Assume that the curve $\bar{\gamma} :[0,\tau] \rightarrow \R^n$ is a geodesic with respect to $\bar{g}$ satisfying the following property ($e_1$ denotes the first vector in the canonical basis $(e_1, \ldots, e_n)$ of $\R^n$):
\begin{itemize}
\item[(A)] $ \left| \dot{\bar{\gamma}} (t) - e_1 \right| \leq 1/10,$  for every $t\in [ 0, \tau]$.
\end{itemize}
Set 
$$
\begin{array}{rl}
\bar{x}^0 & = \left(\bar{x}^0_1, \ldots, \bar{x}^0_n\right) := \bar{\gamma}(0), \quad \bar{v}^0  = \left(\bar{v}^0_1, \ldots, \bar{v}^0_n\right) := \dot{\bar{\gamma}}(0),\\
\bar{x}^{\tau} &  = \left(\bar{x}^{\tau}_1, \ldots, \bar{x}^{\tau}_n\right)   := \bar{\gamma} (\tau), \quad \bar{v}^{\tau}  = \left(\bar{v}^{\tau}_1, \ldots, \bar{v}^{\tau}_n\right) := \dot{\bar{\gamma}}(\tau).
\end{array}
$$
Our aim is to show that, given $(x,v), (y,w) \in \R^n \times \R^n$ with $|v|_{x}^{\bar{g}}= |w|_{y}^{\bar{g}}=  1$ sufficiently close to $\bigl( \bar{x}^0,  \bar{v}^0\bigr)$, there exists a Riemannian metric $\tilde{g}$ of class $C^{k-1}$ which is conformal to $\bar{g}$ and whose support and $C^1$-norm are controlled, which connects $(x, v)$ to  $\left(\bar{\gamma}_{y,w} ( \tau ), \dot{\bar{\gamma}}_{y,w} ( \tau) \right) = \phi^{\bar{g}}_{\tau} (y,w)$ and which preserves finitely many transverse geodesics. Set
 $$
  \mathcal{R} (\rho):= \Bigl\{ (t,z)  \, \vert \, t \in \left[ \bar{x}_1^0, \bar{x}_1^{\tau} \right],\, z \in B^{n-1}(0,\rho)  \Bigr\} \qquad \forall \rho >0.
 $$
Let us state our result.
  
\begin{proposition}\label{PROPconnect}
Let $\tau >0$ and $\bar{\gamma} :[0,\tau] \rightarrow \R^n$ satisfying assumption (A) be fixed. Let $\rho>0$ be such that $\bar{\gamma}\left( [0,\tau]\right) \subset   \mathcal{R} (\rho/2)$ be fixed. There are $\bar{\delta}= \bar{\delta} (\tau,\rho) \in (0,\tau/3)$ and $C= C(\tau,\rho) >0$ such that  the following property is satisfied: For every $(x,v), (y, w) \in U^{\bar{g}}\R^n$ satisfying
\begin{eqnarray} \label{Assconnect1}
\left|x -\bar{x}^0 \right|, \, \left|y -\bar{x}^0\right|, \, \left| v - \bar{v}^0 \right|, \,   \left| w - \bar{v}^0 \right| < \bar{\delta},
\end{eqnarray}
and for every finite set of unit speed geodesics 
$$
\bar{c}_1 \, : \, I_1= [a_1,b_1] \longrightarrow \R^n,  \quad \cdots, \quad \bar{c}_L  \, : \, I_L = [a_L,b_L] \longrightarrow \R^n
$$ 
satisfying 
\begin{eqnarray}\label{Assconnect2}
 \bar{c}_l(a_l),  \bar{c}_l (b_l)   \, \notin \mathcal{R} (\rho)  \qquad \forall l \in \{1, \ldots, L\},
\end{eqnarray}
\begin{eqnarray}\label{Assconnect3}
 \left( \bar{c}_l(s), \dot{\bar{c}}_l (s)\right)  \neq \phi_t^{\bar{g}} (x,v), \, \phi_t^{\bar{g}} (y,w) \qquad \forall l \in \{1, \ldots, L\}, \, \forall s\in I_l, \, \forall t\in [0,\tau],
\end{eqnarray}
\begin{eqnarray}\label{Assconnect4}
\mbox{and } \quad \left| \dot{\bar{c}}_l (s) - \dot{\bar{c}}_l (s') \right| < 1/8  \qquad \forall l \in \{1, \ldots, L\}, \, \forall s, s' \in I_l,
\end{eqnarray}
there are $\tilde{\tau} >0$ and a Riemannian metric $\tilde{g}= e^{f} \bar{g}$ on $\R^n$ with $f: \R^n \rightarrow \R$ of class $C^{k-1}$ (or $f$ of class $C^{\infty}$ if $\bar{g}$ is itself $C^{\infty}$) satisfying the following properties:
\begin{itemize}
\item[(i)] $\mbox{Supp } (f) \subset \mathcal{R}  (\rho) $;
\item[(ii)] $\|f  \|_{C^1} < C \,  \left| (x,v) - (y,w) \right| $;
\item[(iii)] $\left| \tilde{\tau}  - \tau \right| < C \, \left| (x,v) - (y,w) \right|$;
\item[(iv)] $\phi^{\tilde{g}}_{\tilde{\tau}} (x,v) = \phi^{\bar{g}}_{\tau} (y,w)$;
\item[(v)] for every $l\in \{1,\ldots, L\}$ $\bar{c}_l$ is, up to reparametrization,  a geodesic with respect to $\tilde{g}$. 
\end{itemize}
\end{proposition}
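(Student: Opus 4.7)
The overall strategy is to realize $f$ as the solution to a quantitative inverse problem. Set $\eta := |(x,v)-(y,w)|$ and consider the mismatch
\[
\mathcal{F}(f,\tilde{\tau}) := \phi^{\tilde{g}}_{\tilde{\tau}}(x,v) - \phi^{\bar{g}}_{\tau}(y,w), \qquad \tilde{g} = e^{f}\bar{g}.
\]
Condition (iv) is $\mathcal{F}(f,\tilde{\tau}) = 0$, and at $(0,\tau)$ the mismatch is bounded by $C_0\eta$ by smooth dependence of $\phi^{\bar{g}}$ on initial data on $[0,\tau]$. The plan is therefore to exhibit a right inverse of the linearization $D\mathcal{F}(0,\tau)$ of operator norm $\leq C_1(\tau,\rho)$, acting into an admissible subspace of $C^{k-1}$ functions supported in $\mathcal{R}(\rho)$ and satisfying the constraints needed for (v); once such a right inverse is in hand, a Newton iteration produces $(f,\tilde{\tau})$ with $\|f\|_{C^1}+|\tilde{\tau}-\tau|\leq 2C_0C_1\eta$, which gives (ii)--(iv). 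Property (i) is built into the admissible space.

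For the linearization, the conformal change of connection gives that along a $\bar{g}$-unit-speed $\bar{g}$-geodesic $\gamma$ one has
\[
\tilde{\nabla}_{\dot{\gamma}}\dot{\gamma} = \bigl(df(\dot{\gamma})\bigr)\dot{\gamma} - \tfrac{1}{2}\bar{\nabla} f,
\]
so to first order in $f$ the endpoint displacement of $\gamma$ at time $\tau$ is a linear integral functional of $f$ and $(\bar{\nabla} f)^{\perp}$ along $\bar{\gamma}$, weighted by the Jacobi fundamental solution of $\bar{g}$. Lemma 5, flagged in the introduction as the central tool, should be read as the statement that one can produce finitely many admissible conformal factors, supported in prescribed thin patches of $\mathcal{R}(\rho)$, whose first-order endpoint variations span the full tangent space $T_{\phi^{\bar{g}}_{\tau}(\bar{x}^0,\bar{v}^0)}U^{\bar{g}}\R^n$, with uniform $C^1$ bounds. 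In the nearly-Euclidean regime guaranteed by (A), each transverse kick $(\bar{\nabla} f)^{\perp}$ integrates against the Jacobi kernel to a controllable mixture of position and direction displacement, so by selecting supports in several thin transverse slices along $\bar{\gamma}$ one realizes any prescribed endpoint variation with $C^1$ cost $\lesssim \eta$, and a final tangential adjustment handles the travel-time parameter.

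The substantive difficulty, and where (\ref{Assconnect2})--(\ref{Assconnect4}) come in, is property (v): preserving each $\bar{c}_l$ as a $\tilde{g}$-geodesic. A sufficient condition is that $\bar{\nabla} f$ be tangent to $\bar{c}_l$ along $\bar{c}_l$, and the cleanest way to enforce it is to require $\operatorname{supp}(f)$ to be disjoint from $\bigcup_l \bar{c}_l(I_l)\cap \mathcal{R}(\rho)$. By (\ref{Assconnect4}) each $\bar{c}_l$ is nearly straight in $\R^n$; by (\ref{Assconnect2}) its endpoints lie outside $\mathcal{R}(\rho)$, so $\bar{c}_l\cap \mathcal{R}(\rho)$ consists of finitely many nearly-straight chords of the cylinder; and by (\ref{Assconnect3}) no such chord is $1$-jet-tangent to the orbit of $(x,v)$ or $(y,w)$. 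The hard part, which I expect to be the main obstacle, is that $L$ is arbitrary: unboundedly many chords may cluster around $\bar{\gamma}$, and a naive pigeonhole over $[0,\tau]$ does not yield a $\bar{\delta}$-gap depending only on $(\tau,\rho)$. The resolution is to impose the weaker pointwise linear constraint $\langle \bar{\nabla} f,(\dot{\bar{c}}_l)^{\perp}\rangle = 0$ along each $\bar{c}_l$ — a codimension-$L$ linear condition inside the infinite-dimensional admissible space — and to show, using the nearly-Euclidean Jacobi calculus afforded by (A), that the spanning conclusion of Lemma 5 survives this restriction with constants independent of $L$. This uniform surjectivity-under-constraints statement is, in my view, the technical heart of the argument.
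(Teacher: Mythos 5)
Your proposal takes a genuinely different route from the paper, but it contains several gaps, and at least one of them looks structural. The paper's proof is entirely \emph{constructive}: it first builds the connecting trajectory explicitly as a smooth interpolation $\mathcal{X}(t)=(1-\psi(t))\bar\gamma_{x,v}(t)+\psi(t)\bar\gamma_{y,w}(t)$ reparametrized to $\bar g$-unit speed, reads off the ``control'' $\tilde u(t)=2\dot{\tilde p}(t)+2\,\partial_x\bar H(\tilde x(t),\tilde p(t))$ that this curve requires, and then \emph{exhibits} a conformal factor $f$ with $\nabla f(\tilde x(t))=\tilde u(t)$ and $f(\tilde x(t))=0$ via a primitive/cut-off construction supported in a thin tube of half-width $\mu$ around $\tilde x$. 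This gives $\|f\|_{C^1}\lesssim\mu^{-1}\|\tilde u\|_{C^0}\lesssim\eta$ directly (Lemma 6), with no inverse function theorem anywhere. Your reading of ``Lemma 5'' as a Jacobi-field spanning result is a misidentification: item 5 is Proposition~\ref{PROPconnect} itself, and the actual auxiliary lemmas (6--9) are about constructing $f$ with a prescribed gradient along $\tilde x$, about the sufficient collinearity condition for (v), and about transversality via Sard's theorem and projection operators to make all the constraints compatible. No surjectivity of a linearization is ever invoked.

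Beyond the different architecture, your IFT/Newton plan runs into a loss-of-derivatives obstruction you do not address: the geodesic flow of $e^f\bar g$ is controlled by $\|f\|_{C^2}$, not $\|f\|_{C^1}$, so a Newton iteration closing the mismatch $\mathcal F(f,\tilde\tau)$ would naturally produce smallness in $C^2$ (or require a Nash--Moser scheme), whereas the proposition only needs, and only delivers, $\|f\|_{C^1}\lesssim\eta$. In the paper's construction $\|f\|_{C^2}$ is \emph{not} small (it is of order $\eta/\mu$ for a fixed tube width $\mu$), which is precisely why the explicit formula matters and why your two-line ``Newton iteration produces $(f,\tilde\tau)$'' claim cannot be taken as given. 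Your worry about the constant depending on $L$ is a real concern in your framework, but you neither resolve it nor need to pose it that way: in the paper, the obstacles never enter a rank/spanning condition; they enter only through (a) a Sard-type perturbation of the interpolating curve (Lemma 8) making it transverse to $\bar\Gamma$ and moving $\operatorname{Supp}(\tilde u)$ off $\bar\Gamma$, and (b) a finite sequence of local modifications of $f$ near $\bar\Gamma$ (Lemma 9) that enforce the collinearity condition of Lemma 7 with $C^1$ cost controlled by $(\tau,\rho)$ alone. You have correctly identified the sufficient condition behind (v) (collinearity of $\nabla f$ with the momentum along each $\bar c_l$ — note the paper uses collinearity, not vanishing of the transverse component combined with disjoint support, which you also float), but the ``uniform surjectivity under constraints'' you flag as the technical heart is not something the paper needs or proves, and your proposal leaves it entirely open.
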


The proof of Proposition \ref{PROPconnect} occupies Sections \ref{SECwithout} to \ref{SECobstacles}. First, in Section \ref{SECwithout}, we restrict our attention to assertions (i)-(iv) by showing how to connect two unit speed geodesics in a constructive way (compare \cite[Proposition 3.1]{frclosing1} and \cite[Proposition 2.1]{frclosing2}). Then, in Section \ref{SECreparametrization}, we provide a lemma (Lemma \ref{LEMreparametrization}) which explains how a conformal factor may preserve geodesic curves. Finally, in Section \ref{SECobstacles}, we invoke transversality arguments together with Lemma \ref{LEMreparametrization} to conclude the proof of Proposition \ref{PROPconnect}.

%\begin{center}
%\quad \\
%\input{dessinMane1.pdftex_t}
%\quad\\
%\quad\\
%Figure 1
%\end{center}

\subsection{Connecting geodesics without obstacles}\label{SECwithout}

Let us first forget about assertion (v). For every $x\in \R^n$, denote by $\bar{G}(x)$ the $n\times n$ matrix whose coefficients are the $\bigl(\bar{g}_x\bigr)_{i,j}$, set $\bar{Q}:=\bar{G}^{-1}$ and define the Hamiltonian $\bar{H} :\R^n \times \R^n \rightarrow \R$ of class $C^k$ by
$$
\bar{H}(x,p) := \frac{1}{2} \left\langle p, \bar{Q}(x) p \right\rangle \qquad \forall x\in \R^n, \forall p \in \R^n.
$$
There is a one-to-one correspondence between the geodesics associated with $\bar{g}$   and the Hamiltonian trajectories of $\bar{H}$. For every $(x,v) \in \R^n \times \R^n$, the trajectory $\bigl( x(\cdot),p(\cdot)\bigr) : [0,\infty) \rightarrow \R^n \times \R^n$ defined by 
$$
\bigl( x(t),p(t)\bigr) := \Bigl( \bar{\gamma}_{x,v} (t), \bar{G} \bigl(\bar{\gamma}_{x,v}(t)\bigr) \, \dot{\bar{\gamma}}_{x,v} (t) \bigr)\Bigr) \qquad \forall t \geq 0,
$$
is the solution of the Hamiltonian system 
\begin{eqnarray}
\label{sysH}
\left\{ \begin{array}{rcl}
\dot{x}(t) & = & \frac{\partial  \bar{H}}{\partial p}  \bigl(x(t),p(t) \bigr)\\
\dot{p}(t) & = & - \frac{\partial  \bar{H}}{\partial x} \bigl(x(t),p(t) \bigr) 
\end{array}
\right.
\end{eqnarray}
such that $\bigl(x(0),p(0)\bigr) = \bigl(x, \bar{G}(x) \, v\bigr)$. Let $(x,v), (y,w) \in U^{\bar{g}}\R^n$ be fixed, set 
\begin{eqnarray}\label{notations}
x^0 := x, \quad p^0 := \bar{G}(x) \, v,  \quad x^{\tau} := \bar{\gamma}_{y,w} (\tau), \quad v^{\tau} :=   \dot{\bar{\gamma}}_{y,w} (\tau), \quad  p^{\tau} := \bar{G}(x^{\tau}) \, v^{\tau}.
\end{eqnarray}
Our aim is first to find a metric $\tilde{g}$ whose associated matrices $\tilde{G}, \tilde{Q}$ have the form 
$$
\tilde{G}(x)^{-1}= \tilde{Q}(x)= e^{-f(x)} \bar{Q}(x) \qquad \forall x \in \R^n,
$$
 in such a way that the trajectory $\bigl(x(\cdot),p(\cdot)\bigr): [0,\infty) \rightarrow \R^n \times \R^n $ of the Hamiltonian system 
\begin{eqnarray}
\label{sysHS}
\left\{ \begin{array}{rcl}
\dot{x}(t) & = & \frac{\partial  \tilde{H}}{\partial p}  \bigl(x(t),p(t) \bigr)\\
\dot{p}(t) & = & - \frac{\partial  \tilde{H}}{\partial x} \bigl(x(t),p(t) \bigr) 
\end{array}
\right.
\end{eqnarray}
associated with the new Hamiltonian $\tilde{H}=H_f :\R^n \times \R^n \rightarrow \R$ defined by
\begin{eqnarray}\label{HS}
\tilde{H}(x,p) = H_f (x,p) := \frac{1}{2} \left\langle p, \tilde{Q}(x) p \right\rangle = \frac{e^{-f(x)}}{2} \left\langle p, \bar{Q}(x) p \right\rangle  \quad \forall x\in \R^n, \forall p \in \R^n,
\end{eqnarray}
and starting at $\bigl(x^0,p^0)$ satisfies $\left(x (\tau ),p (\tau )\right)= \bigl(x^{\tau},p^{\tau}\bigr)$.  Note that for any $x, p \in \R^n$,
\begin{eqnarray}\label{Hf1}
 \frac{\partial  H_f}{\partial p}  (x,p) = \tilde{Q} (x) p  =  e^{-f(x)} \bar{Q} (x) \, p
\end{eqnarray}
and for every $i=1, \ldots, n$,
\begin{eqnarray}\label{Hf2}
 \frac{\partial H_f}{\partial x_i} (x,p) = \frac{1}{2} \left\langle p, \frac{\partial \tilde{Q}}{\partial x_i}  (x) \, p \right\rangle =  \frac{e^{-f(x)}}{2} \left\langle p, \frac{\partial \bar{Q}}{\partial x_i} (x) \, p \right\rangle -  \frac{1}{2} \left\langle p, \tilde{Q} (x) \, p \right\rangle \frac{\partial f}{\partial x_i}(x).
\end{eqnarray}

Let us fix a smooth function $\psi :[0,\tau] \rightarrow [0,1]$ satisfying
\begin{eqnarray*}
\psi(t) =0\quad \forall t\in [0,\tau/3] \quad \mbox{ and } \quad \psi(t)=1\quad \forall t  \in [2\tau/3,\tau].
\end{eqnarray*}
Given $(x,v), (y,w) \in U^{\bar{g}}\R^n$, we define a trajectory 
$$
\mathcal{X} \bigl(\cdot; (x,v),(y,w)\bigr) : [0,\tau] \, \longrightarrow \, \R^n
$$
of class $C^{k+1}$ by
\begin{eqnarray}\label{formulaX}
\mathcal{X}\bigl(t;(x,v),(y,w)\bigr) := \bigl(1-\psi (t)  \bigr) \, \bar{\gamma}_{x,v} (t) + \psi (t) \, \bar{\gamma}_{y,w} (t) \qquad \forall\, t \in [0,\tau].
\end{eqnarray}
We note that the mapping $\bigl(t, (x,v), (y,w)\bigr) \mapsto  \mathcal{X}\bigl(t;(x,v),(y,w)\bigr) $ is $C^{k+1}$ in the $t$ variable but only $C^{k-1}$ in the variables $x, v, y, w$. Let $\alpha \bigl( \cdot; (x,v), (y,w) \bigr) : [0,\tau] \rightarrow [0,+\infty)$ be the function defined as
\begin{multline*}
\alpha \bigl(t; (x,v), (y,w) \bigr)\\
 := \int_0^t \sqrt{ \left\langle \dot{\mathcal{X}} \bigl(s; (x,v),(y,w)\bigr), \bar{G} \left(   \mathcal{X} \bigl(s; (x,v),(y,w)\bigr) \right)   \, \dot{\mathcal{X}} \bigl(s; (x,v),(y,w)\bigr)   \right\rangle } \, ds,
 \end{multline*}
for every  $t \in [0,\tau]$. We observe that $\alpha \bigl( \cdot; (x,v), (y,w) \bigr)      $ is strictly increasing, of class $C^{k+1}$ in the $t$ variable, and of class $C^{k-1}$ in the variables $x,v,y,w$. Let
$$
\theta \bigl( \cdot; (x,v),(y,w)\bigr) : \left[0,\tilde{\tau}= \tilde{\tau}\bigl( (x,v), (y,w)\bigr):=\alpha \bigl(\tau; (x,v), (y,w)    \bigr)\right] \, \longrightarrow \, [0,\tau]
$$
denote its inverse, which is of class $C^{k+1}$ in $t$, $C^{k-1}$ in $x,v,y,w$, and satisfies (we set $\theta(\cdot) = \theta \left((\cdot; (x,v),(y,w)\right)$ and $\mathcal{X}(\cdot) = \mathcal{X} \left((\cdot; (x,v),(y,w)\right)$)
\begin{eqnarray*}
\dot{\theta} (s) = \frac{1}{ \sqrt{ \left\langle \dot{\mathcal{X}} \bigl( \theta (s) \bigr), \bar{G} \left(   \mathcal{X} \bigl(   \theta (s) \bigr) \right)   \, \dot{\mathcal{X}} \bigl(  \theta (s)\bigr)   \right\rangle }} \qquad \forall s\in [0,\tilde{\tau}].
\end{eqnarray*}
 Then, we define a new trajectory 
$$
\tilde{x} (\cdot) = \tilde{x} \bigl(\cdot; (x,v),(y,w)\bigr) : \left[0,\tilde{\tau}\bigl( (x,v), (y,w)\bigr)\right]  \, \longrightarrow \, \R^n
$$
of class $C^{k+1}$ by
\begin{eqnarray*}
\tilde{x} \bigl(t; (x,v),(y,w)\bigr) := \mathcal{X} \left( \theta (t)   \right) \qquad \forall t \in [0,\tilde{\tau}].
\end{eqnarray*}
 By construction, 
\begin{eqnarray}\label{3juillet1}
\left\{
\begin{array}{l}
 \tilde{x} (t) =  \mathcal{X} \bigl(t;(x,v),(y,w)\bigr) = \bar{\gamma}_{x,v} (t) \quad \forall t \in [0,\tau/3],\\
 \tilde{x}  (t) =  \mathcal{X} \bigl(t;(x,v),(y,w)\bigr) =  \bar{\gamma}_{y,w} (t) \quad  \forall t \in \left[\tilde{\tau} - \tau/3, \tilde{\tau} \right],
\end{array}
\right.
\end{eqnarray}
and
\begin{eqnarray*}
 \left\langle \dot{\tilde{x}} (t), \bar{G} \left(   \tilde{x} (t) \right)   \, \dot{\tilde{x}} (t)    \right\rangle =1\qquad \forall t \in [0,\tilde{\tau}].
 \end{eqnarray*}
 This means that the adjoint trajectory 
$$
 \tilde{p} (\cdot) = \tilde{p} \bigl(\cdot; (x,v),(y,w)\bigr) : \left[0,\tilde{\tau}\bigl( (x,v), (y,w)\bigr)  \right]  \, \longrightarrow \, \R^n
$$
 defined by
\begin{eqnarray}\label{DEFptilde}
 \tilde{p} \bigl(t; (x,v),(y,w)\bigr)  :=  \bar{G} \left(     \tilde{x} (t) \right) \,  \dot{\tilde{x}} (t) \qquad \forall t \in [0,\tilde{\tau}],
\end{eqnarray}
satisfies
\begin{eqnarray}\label{mercredi30a}
\dot{\tilde{x}} (t)  = \frac{\partial \bar{H}}{\partial p}  \left(    \tilde{x} (t),  \tilde{p} (t)  \right)   \qquad \forall t \in [0,\tilde{\tau}]
\end{eqnarray}
and 
\begin{eqnarray}\label{1juillet12}
\bar{H}  \left(    \tilde{x} (t),  \tilde{p} (t)   \right)   = \frac{1}{2}  \qquad \forall  \in [0,\tilde{\tau}].
\end{eqnarray}
 We now define the function 
$$
 \tilde{u} (\cdot)  = \Bigl(    \tilde{u}_1 \bigl(\cdot; (x,v),(y,w)\bigr), \ldots,  \tilde{u}_n \bigl(\cdot; (x,v),(y,w)\bigr)  \Bigr) : \left[0,\tilde{\tau}  \right]  \, \longrightarrow \, \R^n
$$
by
\begin{eqnarray}\label{DEFutilde}
 \tilde{u}_i (t)  := 2  \dot{\tilde{p}}_i (t) + \left\langle  \tilde{p} (t), \frac{\partial \bar{Q}}{\partial x_i} \left( \tilde{x} (t)\right)   \,  \tilde{p} (t)  \right\rangle \qquad \forall i=1, \ldots, n, \, \forall t\in  [0,\tilde{\tau}].
\end{eqnarray}
By construction, the function $\tilde{p}$ is of class $C^{k}$ in the $t$ variable, $\tilde{u}$ is $C^{k-1}$ in the $t$ variable, and all the functions $\tilde{\tau}, \tilde{p}, \tilde{u}$ are $C^{k-1}$ in the $x,y,v,w$ variables. Furthermore, it follows that
$$
\dot{\tilde{p}} (t)  = -  \frac{\partial \bar{H}}{\partial x}  \left(    \tilde{x} (t),  \tilde{p} (t)  \right)  + \frac{1}{2}  \tilde{u} (t)    \qquad \forall t \in [0,\tilde{\tau}],
$$
\begin{eqnarray*}
\left\{
\begin{array}{l}
\left( \tilde{x} (0), \tilde{p} (0) \right) = \left(x^0,p^0\right), \\
\left( \tilde{x}\bigl(\tilde{\tau}\bigr), \tilde{p} \bigl(\tilde{\tau}\bigr) \right) = \left( x^{\tau}, p^{\tau}\right),
\end{array}
\right.
\end{eqnarray*}
(using the notations (\ref{notations}) and remembering (\ref{3juillet1})), and 
\begin{eqnarray}\label{5juillet0}
\tilde{u} \bigl(t; (x,v),(y,w)\bigr) =0_n \qquad \forall t \in [0,\tau/3] \, \cup \, \left[\tilde{\tau} - \tau/3, \tilde{\tau} \right]
\end{eqnarray}
(by (\ref{3juillet1}), (\ref{DEFptilde}), and (\ref{DEFutilde})). Since $\bar{H}$ is of class $C^k$ with $k\geq 3$, the mapping 
\begin{multline*}   
\mathcal{Q}  \, : \, \left( (x,v), (y,w), s \right)  \in   \left( \R^n \times \R^n\right)  \times \left( \R^n  \times \R^n\right) \times [0,1]  \\
\, \longmapsto \,  \left( \tilde{\tau} \bigl( (x,v),(y,w)\bigr), \tilde{u}\bigl(s\tilde{\tau}  \bigl( (x,v),(y,w)\bigr); (x,v),(y,w) \bigr) \right)
\end{multline*}
is of class at least $C^1$. Therefore, since for all $(x,v) \in U^{\bar{g}}\R^n$ with $\left|x-\bar{x}^0\right| \leq 1$, 
$$
\mathcal{Q} \bigl( (x,v), (x,v), s \bigr)  = \left( \tau, 0  \right) \qquad  \forall\, s\in [0,1],
$$
there exists a constant $K >0$ such that, for every pair $(x,v), (y,w) \in U^{\bar{g}}\R^n$ with $\left|x-\bar{x}^0\right|, \left|y - \bar{x}^0\right| \leq 1$, 
\begin{eqnarray}\label{5juillet1}
\left| \tilde{\tau} \bigl( (x,v), (y,w) \bigr) - \tau \right| &\leq& \left|\mathcal{Q} \bigl( (x,v), (y,w) , 0 \bigr) -  \mathcal{Q} \bigl( (x,v), (x,v) , 0 \bigr)\right| \nonumber\\
 &\leq&  K  \left| (x,v) - (y,w) \right|,
\end{eqnarray}
and analogously
\begin{eqnarray}\label{5juillet2}
\bigl\| \tilde{u}  \bigl( \cdot; (x,v), (y,w) \bigr)  \bigr\|_{C^0} \leq K   \left| (x,v) - (y,w) \right|.
\end{eqnarray}
Furthermore, we notice that differentiating (\ref{1juillet12}) yields
$$
\left\langle \frac{\partial \bar{H}}{\partial x} \bigl( \tilde{x}(t), \tilde{p}(t)\bigr), \dot{\tilde{x}}(t)  \right\rangle +
\left\langle \frac{\partial \bar{H}}{\partial p} \bigl( \tilde{x}(t),\tilde{p}(t)\bigr), \dot{\tilde{p}} (t) \right\rangle= 0  \qquad \forall\, t \in \left[0,\tilde{\tau}\right],
$$
which together with  (\ref{mercredi30a}) and  (\ref{DEFutilde}) gives
\begin{eqnarray}\label{5juillet3}
\bigl\langle \tilde{u}(t), \dot{\tilde{x}}(t)\bigr\rangle = 0   \qquad \forall\, t \in \left[0,\tilde{\tau}\right].
\end{eqnarray}
In conclusion, for every $(x,v), (y,w) \in U^{\bar{g}}\R^n$ satisfying $\left|x-\bar{x}^0\right|, \left|y - \bar{x}^0\right| \leq 1$, the function 
$$
t \in \left[0,\tilde{\tau} \bigl( (x,v),(y,w)\bigr)\right] \, \longmapsto \, \Bigl( \tilde{x} \bigl(t; (x,v),(y,w) \bigr),  \tilde{p} \bigl(t; (x,v),(y,w) \bigr),  \tilde{u} \bigl(t; (x,v),(y,w) \bigr)\Bigr)
$$
satisfies for every $t\in \left[  0,\tilde{\tau} \bigl( (x,v),(y,w)\bigr)\right]$ and every $i=1,\ldots,n$,
\begin{eqnarray}\label{systildexp}
\label{syscontrol}
\left\{ \begin{array}{rcl}
\dot{\tilde{x}}(t) & = &   \bar{Q} \bigl(\tilde{x}(t)\bigr) \, \tilde{p}(t)   \\
\dot{\tilde{p}}_i(t) & = &  - \frac{1}{2} \left\langle \tilde{p}(t), \frac{\partial \bar{Q}}{\partial x_i} \bigl( \tilde{x}(t)\bigr) \, \tilde{p}(t) \right\rangle  -  \frac{1}{2} \left\langle \tilde{p}(t), \bar{Q} \bigl( \tilde{x}(t)\bigr) \, \tilde{p}(t) \right\rangle \tilde{u}_i(t),
\end{array}
\right.
\end{eqnarray}
and properties (\ref{5juillet1})-(\ref{5juillet3}) hold. In particular, taking the constant $K>0$ larger if necessary, (\ref{5juillet1})-(\ref{5juillet2}) and (\ref{systildexp}) together with Gronwall's Lemma imply that
\begin{eqnarray}\label{8fev99}
\left| \dot{\tilde{x}}(t) - e_1 \right| \leq K  \left| (x,v) - (y,w) \right|. \qquad \forall t \in [0,\tilde{\tau}].
\end{eqnarray}
The proof of the following lemma (taken from \cite{frclosing1}) is postponed to Section \ref{PROOFeasylemma}.

\begin{lemma}\label{easylemma}
Let $T, \beta, \mu \in (0,1)$ with $3\mu \leq \beta <T$, and let $y (\cdot), w(\cdot)  : [0,T] \rightarrow \R^n$ be two functions of class respectively at least $C^k$ and $C^{k-1}$ satisfying 
\begin{eqnarray}\label{easylemmahyp0}
\left| \dot{y}(t) - e_1 \right| \leq 1/5 \qquad \forall\, t \in [0,T],
\end{eqnarray}
\begin{eqnarray}\label{easylemmahyp1}
w(t) = 0_n \qquad \forall\, t \in [0,\beta] \cup [T-\beta, T],
\end{eqnarray}
\begin{eqnarray}\label{easylemmahyp2}
 \langle \dot{y}(t), w(t) \rangle  = 0 \qquad \forall\, t \in [0,T].
\end{eqnarray}
Then, there exist a constant $K$ depending only on the dimension and $T$, and a function $W: \R^n \rightarrow \R$ of class $C^k$ such that the following properties hold:
\begin{itemize}
\item[(i)] ${\rm Supp} (W) \subset \Bigl\{  y(t) + (0,z) \, \vert \, t \in [\beta/2,T-\beta/2], z \in B^{n-1}(0,\mu) \Bigr\}$;
\item[(ii)] $\|W\|_{C^1} \leq \frac{K}{\mu} \bigl\|w(\cdot) \bigr\|_{C^0}$;
\item[(iii)] $\nabla W (y(t)) = w(t)$ for every $t\in [0,T]$;
\item[(iv)] $W (y(t)) = 0$ for every $t\in [0,T]$.
\end{itemize}
\end{lemma}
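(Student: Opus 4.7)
Since $|\dot{y}(t)-e_1|\le 1/5$ forces $\dot{y}_1(t)\ge 4/5>0$, the first component $y_1:[0,T]\to\R$ is a $C^k$-diffeomorphism onto its image, with inverse $\tau:=y_1^{-1}$. For $x$ in a narrow tube around $y([0,T])$ I set $z(x):=\bigl(x_2-y_2(\tau(x_1)),\ldots,x_n-y_n(\tau(x_1))\bigr)\in\R^{n-1}$, so that $x\leftrightarrow(x_1,z(x))$ is a $C^k$-diffeomorphism there. My proposal is then
\[
W(x) \;:=\; \chi_1(x_1)\,\chi_2\bigl(z(x)\bigr)\,\sum_{i=2}^{n} w_i\bigl(\tau(x_1)\bigr)\bigl(x_i-y_i(\tau(x_1))\bigr),
\]
extended by $0$ outside the tube, where $\chi_1$ is a smooth bump equal to $1$ on $[y_1(\beta),y_1(T-\beta)]$ and supported in $[y_1(\beta/2),y_1(T-\beta/2)]$ (so $\|\chi_1'\|_\infty=O(1/\beta)$), and $\chi_2$ is a smooth bump equal to $1$ on $B^{n-1}(0,\mu/2)$ and supported in $B^{n-1}(0,\mu)$ (so $\|\chi_2\|_{C^1}=O(1/\mu)$).

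Properties (i) and (iv) are immediate from the supports of the cutoffs and from the vanishing of the linear factor at $z=0$. For (iii), at a point $x=y(t)$ both $z=0$ and each $x_i-y_i(\tau(x_1))$ vanish, so every term in $\nabla W(y(t))$ in which a derivative hits $\chi_2$ or one of the factors $(x_i-y_i(\tau))$ disappears, leaving only $\chi_1(y_1(t))\,\nabla\bigl[\sum_{i\ge 2} w_i(\tau)(x_i-y_i(\tau))\bigr]\big|_{y(t)}$. A direct chain-rule computation gives $w_j(t)$ in the $j$-th component for $j\ge 2$; in the first component, the surviving contribution is $-\tau'(y_1(t))\sum_{i\ge 2} w_i(t)\dot{y}_i(t)$, which equals $w_1(t)$ once one uses $\tau'(y_1(t))=1/\dot{y}_1(t)$ and rewrites the orthogonality $\langle w,\dot{y}\rangle\equiv 0$ as $\sum_{i\ge 2} w_i\dot{y}_i=-w_1\dot{y}_1$. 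Hence $\nabla W(y(t))=\chi_1(y_1(t))\,w(t)$, which equals $w(t)$ everywhere on $[0,T]$ because $w$ vanishes wherever $\chi_1\neq 1$.

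The main obstacle is property (ii). The pointwise bound $|W|\le(n-1)\mu\|w\|_{C^0}$ is trivial, since $|x-y(\tau(x_1))|=|z(x)|\le\mu$ on $\mathrm{Supp}(W)$. Product-rule expansion of $\nabla W$ produces several terms; the factor $O(1/\mu)$ coming from $\nabla\chi_2$ is absorbed against $|z|\le\mu$, and the factor $O(1/\beta)$ coming from $\chi_1'$ is absorbed against $\mu$ using the hypothesis $3\mu\le\beta$. The genuinely delicate contribution is the $x_1$-derivative falling on $w_i(\tau(x_1))$, which yields a term of the form $\chi_1\chi_2\sum_{i\ge 2}\dot{w}_i(\tau)\tau'(x_1)\,z_i$; bounding this purely in terms of $\|w\|_{C^0}$, as the statement demands, is the crux. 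I expect this to require either a slightly refined ansatz (adding a $z$-dependent correction that integrates $\dot{w}$ in $x_1$) or an integration-by-parts trick exploiting $\langle w,\dot{y}\rangle\equiv 0$ away from the curve, so that the $\dot{w}$-dependence is exchanged for a harmless $\|w\|_{C^0}$-bounded term.
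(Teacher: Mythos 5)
Your setup is the same as the paper's (tube coordinates around $y(\cdot)$ obtained from $\Phi(t,z)=y(t)+(0,z)$, a cutoff of radius $\mu$ in $z$, the observation that the pullback of $w$ along $d\Phi$ has first component $\langle\dot y,w\rangle=0$ so only the $z$-components of $w$ survive), and your treatment of (i), (iii), (iv) is correct. But the gap you flag in (ii) is real, and your proposal as written does not close it: with the linear-in-$z$ ansatz $\sum_{i\ge2}w_i(\tau(x_1))z_i$, differentiating in $x_1$ hits $w_i(\tau(x_1))$ and produces $\dot w_i$, which is not controlled by $\|w\|_{C^0}$. The orthogonality $\langle\dot y,w\rangle\equiv0$ does not rescue this, since it says nothing about $\dot w$.

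The fix is exactly the ``slightly refined ansatz'' you anticipated, and it is what the paper uses. In the $(t,z)$ coordinates, replace the linear term $w_i(t)\,z_i$ by the antiderivative in the longitudinal direction:
\begin{equation*}
\tilde W(t,z) \;=\; \psi\!\left(\tfrac{|z|}{\mu}\right)\sum_{i=2}^{n}\int_0^{z_i} w_i(t+s)\,ds,
\end{equation*}
with $w$ extended by $0$ outside $[0,T]$ (this is $C^{k-1}$-smooth because $w\equiv0$ on $[0,\beta]\cup[T-\beta,T]$), and then set $W=\tilde W\circ\Phi^{-1}$. Since $\int_0^{z_i}w_i(t+s)\,ds = w_i(t)z_i + O(|z|^2)$, the value and gradient of $W$ along $z=0$ are unchanged, so (iii) and (iv) still hold. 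The point of the integral is that
\begin{equation*}
\partial_t\!\int_0^{z_i} w_i(t+s)\,ds \;=\; w_i(t+z_i)-w_i(t),
\end{equation*}
which is bounded by $2\|w\|_{C^0}$ with no derivative of $w$; meanwhile the $\psi'$-term is $O(1/\mu)$ against $\bigl|\sum_i\int_0^{z_i}w_i\bigr|=O(\mu\|w\|_{C^0})$, and the $z_j$-derivative of the integral is $w_j(t+z_j)=O(\|w\|_{C^0})$. This gives $\|\tilde W\|_{C^1}\le \frac{K}{\mu}\|w\|_{C^0}$, hence (ii) after composing with $\Phi^{-1}$, whose $C^1$ norm is bounded in terms of $T$ by (\ref{easylemmahyp0}). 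A further payoff you may not have noticed: the integrated ansatz is one derivative smoother in $t$ than the linear one, which is what lets $W$ be of class $C^k$ even though $w$ is only $C^{k-1}$. Note also that the separate cutoff $\chi_1(x_1)$ in your formula is unnecessary once $w$ is extended by zero; the support condition (i) then follows from (\ref{easylemmahyp1}) and the cutoff in $z$ alone.
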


Therefore   taking $\bar{\delta} \in (0,\tau/3)$ in (\ref{Assconnect1}) small enough, applying the above Lemma with $y(\cdot)=\tilde{x}(\cdot), w(\cdot)=\tilde{u}(\cdot), T=\tilde{\tau}, \beta = \tau/3,$ and $\mu>0$ small enough, and remembering assumption (A), that $\bar{\gamma}([0,\tau]) \subset \mathcal{R} (\rho/2)$,  (\ref{5juillet0}), (\ref{5juillet2})-(\ref{5juillet3}), and (\ref{8fev99}) yields a universal constant $C=C(\tau,\rho)>0$  and a function  $f:\R^n \rightarrow \R$ of class $C^k$ satisfying the following properties:
\begin{itemize}
\item[(a)] $\mbox{Supp } (f) \subset \mathcal{R} (\rho) $;
\item[(b)] $\|f  \|_{C^1} < C \,   \left| (x,v) - (y,w) \right|$;
\item[(c)] for every $t\in [0,\tilde{\tau}]$, $\nabla f\bigl( \tilde{x}(t)\bigr) = \tilde{u}(t)$;
\item[(d)]  for every $t\in [0,\tilde{\tau}]$, $f\bigl( \tilde{x}(t)\bigr) = 0$.
\end{itemize} 

Then, there is a one-to-one correspondence between the geodesics of $\tilde{g}:=e^f \bar{g}$ and the solutions of the Hamiltonian system (\ref{sysHS}) associated with $\tilde{H}=H_f$ given by (\ref{HS}). For every $t\in [0,\tilde{\tau}]$, by construction of $f$, the function $
\bigl( \tilde{x}(\cdot),\tilde{p}(\cdot)\bigr) : [0,\tilde{\tau}] \longrightarrow \R^n \times \R^n$ satisfies
\begin{eqnarray*}
\dot{\tilde{x}} (t)   = e^{- f \bigl( \tilde{x}(t)\bigr)} \bar{Q} \bigl(\tilde{x}(t)\bigr) \, \tilde{p}(t)
\end{eqnarray*}
and for every $i=1, \ldots, n$,
\begin{eqnarray*}
\dot{\tilde{p}}_i(t) =   - \frac{e^{-  f \bigl( \tilde{x}(t)\bigr)    }}{2} \left\langle \tilde{p}(t), \frac{\partial \bar{Q}}{\partial x_i} \bigl(\tilde{x}(t)\bigr) \, \tilde{p}(t) \right\rangle  -  \frac{e^{-  f \bigl( \tilde{x}(t)\bigr)  }}{2} \left\langle \tilde{p}(t), \bar{Q} \bigl(\tilde{x}(t)\bigr) \, \tilde{p}(t) \right\rangle \frac{\partial f}{\partial x_i} \bigl( \tilde{x}(t)\bigr).
\end{eqnarray*}
This means that $\tilde{x}(\cdot)$ is a geodesic on $[0,\tilde{\tau}]$ with respect to $\tilde{g}$ starting from $ \tilde{x}(0)= x^0=x$ with initial velocity $v = \bar{G}(x^0)^{-1} \, p^0 = \tilde{G}(x^0)^{-1} \, \tilde{p}(0)$ and ending at $\tilde{x}(\tau) = x^{\tau} $ with final velocity $v^{\tau}= \bar{G}(x^{\tau})^{-1} \, p^{\tau} = \tilde{G}(x^{\tau})^{-1} \, \tilde{p}(\tau).$ This proves assertions (i)-(iv) of Proposition \ref{PROPconnect}.

\subsection{One remark about reparametrization}\label{SECreparametrization}

The following result will be useful to insure that the geodesic curves $\bar{c}_l(I_l)$ are preserved.

\begin{lemma}\label{LEMreparametrization}
Let $\bar{c}: I =[a,b] \rightarrow \R^n$ be a unit speed geodesic with respect to $\bar{g}$, $\bar{f} :\R^n \rightarrow \R$ be a function of class at least $C^2$, and $ \bar{\lambda} :\R^n \rightarrow \R$ be  such that
\begin{eqnarray}\label{ASSflambda}
\nabla \bar{f} \left( \bar{c}(t)\right) = \bar{\lambda} (t) \bar{p}(t) := \bar{\lambda} (t) \, \bar{G} \left(\bar{c}(t)\right) \,   \dot{\bar{c}}(t) \qquad \forall t \in I,
\end{eqnarray}
where $\nabla \bar{f}$ denotes the gradient of $\bar{f}$ with respect to the Euclidean metric. Then up to reparametrization, $c$ is a unit speed geodesic with respect to the metric $e^{\bar{f}} \bar{g}$.
\end{lemma}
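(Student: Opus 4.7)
The plan is to work with the Hamiltonian formulation used in Section~\ref{SECwithout}. Geodesics of $\bar g$ correspond to trajectories of $\bar H(x,p) = \tfrac{1}{2}\langle p, \bar Q(x)p\rangle$, with unit-speed geodesics lying on $\{\bar H=1/2\}$; similarly unit-speed geodesics of $\tilde g = e^{\bar f}\bar g$ correspond to trajectories of $\tilde H(x,p) = e^{-\bar f(x)}\bar H(x,p)$ on $\{\tilde H = 1/2\}$. I would introduce the reparametrization $s\mapsto t(s)$ defined by $dt/ds = e^{-\bar f(\bar c(t))/2}$, set $\tilde c(s) := \bar c(t(s))$, and define the candidate dual momentum $\tilde p(s) := e^{\bar f(\bar c(t(s)))/2}\,\bar p(t(s))$, where $\bar p(t) = \bar G(\bar c(t))\dot{\bar c}(t)$. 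Since $\tilde G = e^{\bar f}\bar G$ and $|\dot{\bar c}|_{\bar g}=1$, a direct check shows that $\tilde c$ has unit $\tilde g$-speed and that $\tilde p(s) = \tilde G(\tilde c(s))\,d\tilde c/ds$; it therefore suffices to verify that $(\tilde c,\tilde p)$ solves Hamilton's equations for $\tilde H$.

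The first equation $d\tilde c/ds = \partial_p \tilde H(\tilde c,\tilde p)$ is routine from the chain rule and the quadratic dependence of $\bar H$ on $p$: using $\partial_p\tilde H(x,p) = e^{-\bar f(x)}\bar Q(x)p$ and $\dot{\bar c} = \bar Q(\bar c)\bar p$, both sides equal $e^{-\bar f/2}\dot{\bar c}(t)$. For the second, I would compute
\begin{equation*}
\frac{d\tilde p}{ds} \;=\; \tfrac{1}{2}\langle \nabla\bar f(\bar c),\dot{\bar c}\rangle\,\bar p(t) \;-\; \frac{\partial \bar H}{\partial x}(\bar c,\bar p),
\end{equation*}
using $dt/ds = e^{-\bar f/2}$ and $\dot{\bar p} = -\partial_x\bar H(\bar c,\bar p)$. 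On the other side, differentiating $\tilde H = e^{-\bar f}\bar H$, evaluating at $(\tilde c,\tilde p)$ on the level set $\tilde H=1/2$, and using the homogeneity of $\bar H$ in $p$ (so that $\bar H(\bar c,\tilde p) = e^{\bar f}\bar H(\bar c,\bar p) = e^{\bar f}/2$ and $\partial_x\bar H(\bar c,\tilde p) = e^{\bar f}\partial_x\bar H(\bar c,\bar p)$), one obtains
\begin{equation*}
-\frac{\partial \tilde H}{\partial x}(\tilde c,\tilde p) \;=\; -\frac{\partial \bar H}{\partial x}(\bar c,\bar p) \;+\; \tfrac{1}{2}\nabla\bar f(\bar c).
\end{equation*}

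The key point, where hypothesis (\ref{ASSflambda}) enters, is the matching of the remaining terms: the two right-hand sides coincide precisely when $\langle \nabla \bar f(\bar c),\dot{\bar c}\rangle\,\bar p(t) = \nabla\bar f(\bar c)$. Substituting $\nabla\bar f(\bar c(t)) = \bar\lambda(t)\bar p(t)$ and using $|\dot{\bar c}|_{\bar g}=1$ gives $\langle \nabla\bar f,\dot{\bar c}\rangle = \bar\lambda\langle \bar p,\dot{\bar c}\rangle = \bar\lambda|\dot{\bar c}|_{\bar g}^2 = \bar\lambda$, so both sides equal $\bar\lambda\bar p$, as required. Thus $(\tilde c,\tilde p)$ is a trajectory of the Hamiltonian flow of $\tilde H$ on $\{\tilde H=1/2\}$, and the projection $\tilde c(s) = \bar c(t(s))$ is a unit-speed geodesic of $\tilde g$; hence $\bar c$ itself is a $\tilde g$-geodesic up to reparametrization. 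The only place any genuine work occurs is the bookkeeping with the conformal factor under the Legendre transform, but because $\bar H$ is quadratic in $p$ this amounts to keeping track of a single exponential, so I do not anticipate a real obstacle.
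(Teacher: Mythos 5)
Your proposal is correct and follows essentially the same path as the paper: the same reparametrization with $dt/ds = e^{-\bar f(\bar c)/2}$, the same definition of the rescaled momentum $\tilde p = e^{\bar f/2}\bar p$, and a direct verification that $(\tilde c, \tilde p)$ satisfies Hamilton's equations for $\tilde H = e^{-\bar f}\bar H$, with the hypothesis $\nabla\bar f = \bar\lambda\bar p$ entering to reconcile the two $\bar f$-terms. The only cosmetic difference is that you exploit the homogeneity of $\bar H$ in $p$ to handle the conformal factor abstractly, whereas the paper carries out the corresponding cancellations component by component.
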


Of course, Lemma \ref{LEMreparametrization}  is a consequence of the fact that the gradient of $\bar{f}$ with respect to $\bar{g}$ at $\bar{c}(t)$ is always colinear with the velocity $\dot{\bar{c}}(t)$. Such a result could be found in textbooks of Riemannian geometry. For sake of completeness, we prove Lemma \ref{LEMreparametrization} with the Hamiltonian point of view.

\begin{proof}[Proof of Lemma \ref{LEMreparametrization}]
Define the function $\beta : I \rightarrow \R$  by 
\begin{eqnarray}\label{beta28juin}
\beta (t) := \int_0^t e^{\bar{f}\left( \bar{c}(s)\right)/2} \, ds \qquad \forall t \in I.
\end{eqnarray}
It is a strictly increasing function of class at least $C^3$ from $I$ to $\tilde{I} = [0,\tilde{\tau}] := \beta(I) $.  Denote by $\theta : \tilde{I} \rightarrow I$ its inverse. Note that $\theta$ is at least $C^3$ and satisfies 
\begin{eqnarray}\label{13juin}
\dot{\theta} (s) = e^{-\bar{f}\left( \bar{c}(\theta(s))\right)/2} \qquad \forall s \in \left[0,\tilde{\tau}\right].
\end{eqnarray}
Define $\tilde{c}, \tilde{p}: \tilde{I} \rightarrow \R^n $ by 
$$
\tilde{c}(s) := \bar{c} \bigl( \theta(s)\bigr) \quad \mbox{ and } \quad  \tilde{p} (s) :=  e^{\bar{f}\left( \tilde{c}(s)\right)/2 } \, \bar{p}(\theta(s))  \qquad \forall s \in \tilde{I}.
$$
The metric $\hat{g} := e^{\bar{f}} \bar{g}$ is associated with matrices $\hat{G}, \hat{Q}$ given by
$$
\hat{G}(x)^{-1}= \hat{Q}(x)= e^{-\bar{f}(x)} \bar{Q}(x) \qquad \forall x \in \R^n.
$$
Then, for every $s\in \tilde{I}$,  $\dot{\tilde{c}}(s)$ and $\tilde{p}(s)$ are given by 
$$
\dot{\tilde{c}} (s) = \dot{\theta} (s) \dot{\bar{c}} \bigl(\theta(s)\bigr) =  \dot{\theta} (s)  \bar{Q} \bigl( \bar{c} (\theta(s)\bigr) \, \bar{p} (\theta(s)) =  \hat{Q} \bigl( \tilde{c} (s)\bigr) \, \tilde{p} (s)
$$
and (using (\ref{13juin}))
\begin{eqnarray*}
\bigl(\dot{\tilde{p}}\bigr)_i (s) & = & \frac{d}{ds} \left(  e^{\bar{f}\left( \tilde{c}(s)\right)/2 } \right)  \bigl(\bar{p}\bigr)_i (\theta(s)) +    e^{\bar{f}\left( \tilde{c}(s)\right)/2 } \, \dot{\theta}(s) \,  \bigl(\dot{\bar{p}}\bigr)_i (\theta(s)) \\
& = & \frac{d}{ds} \left(  e^{ \bar{f}\left( \tilde{c}(s)\right)/2 } \right)  \bigl(\bar{p}\bigr)_i (\theta(s)) - \frac{1}{2}  \,  \left\langle \bar{p} \bigl( \theta(s)\bigr),  \frac{\partial \bar{Q}_i}{\partial x_i} \bigl( \bar{c}(\theta(s)) \,  \bar{p} \bigl( \theta(s)\bigr) \right\rangle \\
& = & \frac{d}{ds} \left(  e^{\bar{f}\left( \tilde{c}(s)\right)/2 } \right)  \bigl(\bar{p}\bigr)_i (\theta(s)) - \frac{ e^{-\bar{f}\left( \tilde{c}(s)\right) }}{2} \left\langle \tilde{p}(s),  \frac{\partial \bar{Q}_i}{\partial x_i} \bigl( \tilde{c}(s) \,  \tilde{p}  (s) \right\rangle,
\end{eqnarray*}
where the first term is equal to (using (\ref{ASSflambda}))
\begin{eqnarray*}
 \frac{d}{ds} \left(  e^{\bar{f}\left( \tilde{c}(s)\right)/2 } \right)  \bigl(\bar{p}\bigr)_i (\theta(s))  & = & \frac{  e^{\bar{f}\left( \tilde{c}(s)\right)/2 }  }{2} \left\langle  \nabla \bar{f} \bigl(\tilde{c}(s)\bigr), \dot{\tilde{c}}(s) \right\rangle \bigl(\bar{p}\bigr)_i (\theta(s)) \\
& = & \frac{1}{2} \, e^{\bar{f}\left( \tilde{c}(s)\right)/2 } \left\langle  \bar{\lambda} (\theta(s)) \, \bar{p} (\theta(s)) , \hat{Q} \bigl( \tilde{c} (s) \, \tilde{p} (s) \right\rangle \bigl(\bar{p}\bigr)_i (\theta(s)) \\
& = & \frac{1}{2} \,  \left\langle  \tilde{p} (s) ,    \hat{Q} \bigl( \tilde{c} (s)\bigr) \, \tilde{p} (s) \right\rangle  \Bigl(  \bar{\lambda} (\theta(s))   \bigl(\bar{p}\bigr)_i (\theta(s)) \Bigr)\\
& = & \frac{1 }{2} \, \left\langle \tilde{p} (s) ,    \hat{Q} \bigl( \tilde{c} (s)\bigr) \, \tilde{p} (s) \right\rangle \, \frac{\partial \bar{f}}{\partial x_i}  \bigl( \tilde{c} (s)\bigr).
\end{eqnarray*}
Remembering (\ref{Hf1})-(\ref{Hf2}) with $f=\bar{f}$ and $\tilde{Q}=\hat{Q}$, this proves that $\bigl( \tilde{c}(\cdot), \tilde{p}(\cdot)\bigr) : \tilde{I} \rightarrow \R^n \times \R^n$ is a trajectory of the Hamiltonian system associated with $\tilde{H}=H_{\bar{f}}$ and in turn concludes the proof of the lemma.
\end{proof}

\subsection{Dealing with obstacles}\label{SECobstacles}

We now proceed to explain how to modify our construction in order to get assertion (v) of Proposition \ref{PROPconnect}. We fix $(x,v),  (y,w) \in U^{\bar{g}}\R^n$ satisfying (\ref{Assconnect1}) and consider a finite set of unit speed geodesics 
$$
\bar{c}_1 \, : \, I_1  \longrightarrow  \R^n, \quad  \cdots, \quad  \bar{c}_L  \, : \, I_L \longrightarrow \R^n
$$ 
satisfying assumptions (\ref{Assconnect2})-(\ref{Assconnect3}). We  set
$$
\bar{\Gamma} := \bigcup_{l=1}^L \bar{c}_l  \left( I_l \right).
$$
The construction that we performed in the previous section together with transversality arguments yield the following result. (We recall that for any function $\tilde{u}(\cdot) :[0,\tilde{\tau}] \rightarrow \R^n$, ${\rm Supp} \bigl(\tilde{u} (\cdot) \bigr) $ denotes the closure of the set of $t\in [0,\tilde{\tau}]$ such that $\tilde{u}(t)=0$.)

\begin{lemma}\label{LEMTHOM}
Taking $\bar{\delta}>0$ in (\ref{Assconnect1}) small enough, there are a positive constant $C=C(\tau,\rho)$, $\tilde{\tau}= \tilde{\tau} \bigl( (x,v),(y,w)\bigr)>0$, a function 
$$
\left( \tilde{x}(\cdot), \tilde{p}(\cdot) \right) =  \left( \tilde{x} \bigl( \cdot; (x,v), (y,w)\bigr),  \tilde{p} \bigl( \cdot; (x,v), (y,w)\bigr) \right)   \,     : \, [0,\tilde{\tau}] \longrightarrow \R^n
$$
of class $C^k$, and a function 
$$
\tilde{u}(\cdot)  =  \tilde{u} \bigl( \cdot; (x,v), (y,w)\bigr)   \,     : \, [0,\tilde{\tau}] \longrightarrow \R^n
$$
of class $C^{k-1}$ satisfying  (\ref{5juillet3}), (\ref{systildexp}),
\begin{eqnarray}\label{6juillettau}
\left| \tilde{\tau} - \tau \right| < C \, \left| (x,v) - (y,w) \right|,
\end{eqnarray}
\begin{eqnarray}\label{SupportUTHOM}
{\rm Supp} \bigl(\tilde{u} (\cdot) \bigr) \subset [\tau/5, 4\tau/5 ],
\end{eqnarray}
\begin{eqnarray}\label{6juilletu}
\bigl\| \tilde{u} \bigr\|_{C^0} \leq  C \,   \left| (x,v) - (y,w) \right|, 
\end{eqnarray}
\begin{eqnarray}\label{14juinMIDI}
 \bigl(\tilde{x}(0), \tilde{p}(0)\bigr) =  \bigl(x^{0},p^{0}\bigr), \quad     \bigl(\tilde{x}(\tilde{\tau}), \tilde{p}(\tilde{\tau})\bigr) =  \bigl(x^{\tau},p^{\tau}\bigr),
\end{eqnarray}
such that the following properties are satisfied: 
\begin{itemize}
\item[(i)] the curve $\tilde{x}\left({\rm Supp} \bigl(\tilde{u} (\cdot) \bigr) \right)$ is transverse to $\bar{\Gamma}$;
\item[(ii)] the set $\mathcal{T}_{\tilde{u}} \subset {\rm Supp} \bigl(\tilde{u} (\cdot) \bigr) $ defined by 
$$
\mathcal{T}_{\tilde{u}} := \Bigl\{ t \in {\rm Supp} \bigl(\tilde{u} (\cdot) \bigr)  \, \vert \, \tilde{x}(t) \in \bar{\Gamma} \Bigr\}
$$
is empty.
\end{itemize}
\end{lemma}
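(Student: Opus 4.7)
\emph{Strategy and construction.} The plan is to enrich the construction of Section \ref{SECwithout} with an auxiliary perturbation parameter, and then apply a Thom transversality / Sard argument to select a parameter that makes $\tilde{x}\bigl({\rm Supp}(\tilde{u})\bigr)$ disjoint from $\bar{\Gamma}$. Concretely, I would fix a bump function $\chi \in C^{\infty}\bigl([0,\tau],[0,1]\bigr)$ with support compactly contained in $(\tau/5,4\tau/5)$ and $\chi \equiv 1$ on $[\tau/3,2\tau/3]$, and, for a parameter $\xi = (\xi_1,\ldots,\xi_{n-1}) \in B^{n-1}(0,\eta)$ with $\eta>0$ small, replace the interpolation (\ref{formulaX}) by
$$
\mathcal{X}_\xi(t) := \bigl(1-\psi(t)\bigr)\,\bar{\gamma}_{x,v}(t) + \psi(t)\,\bar{\gamma}_{y,w}(t) + \chi(t) \sum_{j=1}^{n-1} \xi_j\, e_{j+1}.
$$
One then runs the procedure of Section \ref{SECwithout} verbatim with $\mathcal{X}_\xi$ in place of $\mathcal{X}$, producing $\tilde{x}_\xi,\tilde{p}_\xi,\tilde{u}_\xi,\tilde{\tau}_\xi$ depending smoothly on $\xi$ and agreeing with the Section \ref{SECwithout} data at $\xi=0$. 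For $\eta$ and $\bar{\delta}$ small enough, the identity (\ref{5juillet3}), the system (\ref{systildexp}), the boundary conditions (\ref{14juinMIDI}), and the estimates (\ref{6juillettau}), (\ref{6juilletu}) persist uniformly in $\xi$ with a slightly larger constant $C=C(\tau,\rho)$. Since the added term vanishes outside $\mbox{supp}(\chi)\subset(\tau/5,4\tau/5)$, the same vanishing argument that produced (\ref{5juillet0}) yields ${\rm Supp}\bigl(\tilde{u}_\xi\bigr)\subset [\tau/5,4\tau/5]$, giving (\ref{SupportUTHOM}).

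\emph{Transversality.} Next I would consider the smooth evaluation map
$$
F : B^{n-1}(0,\eta) \times [\tau/5,4\tau/5] \longrightarrow \R^n, \quad F(\xi,t) := \tilde{x}_\xi(t).
$$
At $\xi=0$, $\partial_{\xi_j} F(0,t) = \chi(t)\,e_{j+1}$ to leading order (modulo small corrections coming from the reparametrization $\theta_\xi$), while $\partial_t F(\xi,t)=\dot{\tilde{x}}_\xi(t)$ is close to $e_1$ by the analogue of (\ref{8fev99}) for the parametric family. Consequently $dF$ has full rank $n$ on $\{(\xi,t) : \chi(t)>0\}$ for $\eta$ small enough. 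Applying Thom's transversality theorem, equivalently Sard's theorem to the zero set of $(\xi,t,s)\mapsto \tilde{x}_\xi(t)-\bar{c}_l(s)$ for each $l=1,\ldots,L$, yields that for almost every $\xi\in B^{n-1}(0,\eta)$ the curve $t\mapsto \tilde{x}_\xi(t)$ on $[\tau/5,4\tau/5]$ is transverse to every $\bar{c}_l(I_l)$, which is assertion (i). Since both curves are $1$-dimensional, for $n\geq 3$ the dimension count $1+1<n$ forces the transverse intersection to be empty, giving (ii). For $n=2$, isolated transverse intersections can still occur, but assumption (\ref{Assconnect3}) forces $\dot{\tilde{x}}_\xi(t)$ and $\dot{\bar{c}}_l(s)$ to be linearly independent at any putative intersection point, so by the implicit function theorem the bad set in $\xi$-space is a proper analytic subvariety of codimension $\geq 1$, which may be avoided by a further generic choice of $\xi$; this furnishes $\mathcal{T}_{\tilde{u}}=\emptyset$.

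\emph{Main obstacle.} The principal difficulty is the uniform verification of the $C^0$/$C^1$ estimates and the support inclusion for the parametrized family $(\tilde{x}_\xi,\tilde{p}_\xi,\tilde{u}_\xi)$, since the reparametrization functions $\alpha_\xi,\theta_\xi$ depend nonlinearly on $\xi$ and must be controlled by Gronwall-type arguments applied to the perturbed Hamiltonian system; once these uniform estimates are in place, the Thom/Sard step gives (i) and (ii) as a clean consequence. A secondary subtlety is the case $n=2$, where dimension alone does not force disjointness and one must appeal to (\ref{Assconnect3}) to rule out persistent intersections in a neighborhood of $\xi=0$.
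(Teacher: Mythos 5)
Your argument for assertion (i) is essentially the paper's: you introduce a transverse perturbation parameter ($\xi\in\R^{n-1}$; the paper uses $\omega\in\{0\}\times\R^{n-1}$) applied through a cutoff, and you invoke Sard/parametric transversality. The only material difference is that the paper determines the cutoff's support via (\ref{notin}) (times $t_1,t_2$ where $\mathcal{X}$ is already off $\bar{\Gamma}$), which is what protects the boundary of $\mathrm{Supp}(\tilde{u})$; you should add this step, since your cutoff's transition zones may sit over points where $\mathcal{X}$ already meets $\bar{\Gamma}$ with $\chi$ too small to push the curve away.

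The real gap is in (ii). Your dimension count $1+1<n$ is correct and does give (ii) for free from (i) when $n\geq 3$ — indeed this is slightly cleaner than what the paper does in that range. But for $n=2$ the argument breaks down, and the repair you propose does not work. Assumption (\ref{Assconnect3}) says nothing about the modified curve $\tilde{x}_\xi$; it only rules out coincidence of $(\bar{c}_l,\dot{\bar{c}}_l)$ with the two specific flow trajectories $\phi_t^{\bar{g}}(x,v)$ and $\phi_t^{\bar{g}}(y,w)$, so it does not give linear independence of $\dot{\tilde{x}}_\xi$ and $\dot{\bar{c}}_l$ at an intersection. More importantly, even granting transversality (which you already have from (i)), the implicit function theorem cuts the wrong way: a transverse intersection of two curves in $\R^2$ is \emph{stable}, so the set of $\xi$ for which $\tilde{x}_\xi$ meets $\bar{c}_l$ is \emph{open} near such a $\xi$, not a positive-codimension subvariety. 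You cannot make the intersection empty by a further generic choice of $\xi$. The paper handles (ii) by a completely different mechanism that works uniformly in $n$: at each of the finitely many times $\bar{t}\in\mathrm{Supp}(\tilde{u})$ with $\tilde{x}(\bar{t})\in\bar{\Gamma}$, one replaces $\tilde{x}$ on a small interval around $\bar{t}$ by the $\bar{g}$-geodesic through $\bigl(\tilde{x}(\bar{t}),\dot{\tilde{x}}(\bar{t})\bigr)$, glued with a bump function. This does not move the intersection point, but it makes the modified curve a genuine $\bar{g}$-geodesic near $\bar{t}$, hence $\tilde{u}=0$ there, so $\bar{t}$ is expelled from $\mathrm{Supp}(\tilde{u})$ and $\mathcal{T}_{\tilde{u}}$ becomes empty. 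You need this local surgery (or an equivalent device) for the surface case $n=2$.
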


\begin{proof}[Proof of Lemma \ref{LEMTHOM}]
Let us consider the trajectory 
$$
\mathcal{X} (\cdot) = \mathcal{X} \bigl(\cdot; (x,v),(y,w)\bigr) : [0,\tau] \, \longrightarrow \, \R^n
$$
of class $C^{k+1}$ defined by (\ref{formulaX}). Since $\mathcal{X}(\cdot)$ coincides respectively with $\bar{\gamma}_{x,v}$ and $\bar{\gamma}_{y,w}$ on the intervals $[0,\tau/3]$ and $[2\tau/3, \tau]$ and since the $\bar{c}_l$'s are unit speed geodesics satisfying (\ref{Assconnect3}), there are $t_1 \in (0,\tau/3)$, $t_2 \in (2\tau/3, \tau)$ and $\nu  \in (0,\tau/100)$ such that 
\begin{eqnarray}\label{notin}
\mathcal{X} (t) \notin \bar{\Gamma} \qquad \forall t\in \left[ t_1-\nu, t_1+\nu\right] \, \cup \, \left[ t_2-\nu, t_2+\nu\right]. 
\end{eqnarray}
Moreover, since $\mathcal{X}$ is a reparametrization of $\tilde{x}(\cdot)$ satisfying (\ref{8fev99}), we have 
$$
 \left| \dot{\mathcal{X}}(t) - e_1 \right| \leq K'  \left| (x,v) - (y,w) \right|. \qquad \forall t \in [0,\tau],
$$
for some positive constant $K'$. Then taking $\bar{\delta}>0$ in (\ref{Assconnect1})  small enough and remembering (\ref{Assconnect4}), to prove (i) it is sufficient to show that we can perturb the curve $\mathcal{X}([0,\tau])$ to make it transverse to all the geodesic curves $\bar{c}(I_l)$ verifying 
$$
\left| \dot{\bar{c}}_l(s) - e_1 \right| < 1/2 \qquad \forall s \in I_l=[a_l,b_l].
$$
Without loss of generality, we may assume that for each such curve (denote by $\mathcal{L}$ the set of such $l$), we have $\bigl(\bar{c}_l(a_l)\bigr)_1 \leq \bar{x}^0 $ and $\bigl(\bar{c}_l(b_l)\bigr)_1 \geq \bar{x}^{\tau}$ (remember (\ref{Assconnect2})). Let us parametrize both curves $\mathcal{X} (\cdot)$ and $\bar{c}_l(\cdot)$ by their first coordinates (where $l\in \mathcal{L}$ is fixed). Namely, there are two diffeomorphisms $\theta_1 :J_1=[\alpha,\beta] \rightarrow [0,\tau], \theta_2 : J_2=[\alpha',\beta'] \rightarrow I_l$   of class $C^{k+1}$ such that
\begin{eqnarray}\label{6juillet1}
\left(\bigl( \mathcal{X} \circ \theta_1 \bigr) (s)\right)_1 = s \quad \forall s \in J_1 \quad \mbox{ and } \quad \left( \bigl( \bar{c}_l \circ \theta_2 \bigr) (s)\right)_1 = s \quad \forall s \in J_2.
\end{eqnarray}
Extending $I_l$ if necessary, we may indeed assume that $J_1 \subset J_2$. Define the  function $h_l : I \rightarrow \R^{n}$ of class $C^{k+1}$ by
$$
h_l(s) :=   \bigl( \mathcal{X} \circ \theta_1 \bigr) (s) -  \bigl( \bar{c}_l \circ \theta_2 \bigr) (s)  \qquad \forall s \in J_1=[\alpha,\beta].
$$
Fix a smooth function $\psi: [0,\tau] \rightarrow [0,1]$ satisfying 
\begin{eqnarray}\label{6juilletpsi}
\psi (t) = 0 \quad \forall t \in \left[0,t_1 - \nu\right] \cup \left[t_2+\nu,\tau\right] \quad \mbox{ and } \quad \psi (t) = 1 \quad \forall t \in \left[t_1+\nu,t_2-\nu\right].
\end{eqnarray}
For every $\omega  \in \R^{n}$ with $\omega_1 =0 $, define the curve $\mathcal{X}_{\omega} : [0,\tau] \rightarrow \R^n$ by
$$
\mathcal{X}_{\omega}(t) :=   \mathcal{X}  (t) +  \psi (t   ) \, \omega \qquad \forall t \in [0,\tau].
$$
If $\mathcal{X}_{\omega}\left([0,\tau]\right)$ intersects $\bar{c}_l(I_l)$, then 
\begin{eqnarray*}
 0_n & = &  \mathcal{X}_{\omega} (t) -  \bar{c}_l  (s) \\
 & = &  \mathcal{X} (t) -  \bar{c}_l  (s) + \psi(t)\, \omega \\
& = & \left( \mathcal{X} \circ \theta_1 \right) \bigl(\theta_1^{-1}(t) \bigr) -  \left( \bar{c}_l  \circ \theta_2 \right) \bigl(\theta_2^{-1}(s) \bigr)    + \psi(t)\, \omega,
 \end{eqnarray*}
for some $t\in [0,\tau]$ and $s\in J_1$. Since $\omega_1=0$ and (\ref{6juillet1}) is satisfied, we must have $ \theta_1^{-1}(t)= \theta_2^{-1}(s)$, then we obtain 
 $$
  0_n = \left( \mathcal{X} \circ \theta_1 \right) \bigl(\theta_1^{-1}(t) \bigr) -  \left( \bar{c}_l  \circ \theta_2 \right) \bigl(\theta_1^{-1}(t) \bigr)    + \psi(t) \,\omega = h_l \bigl(\theta_1^{-1}(t) \bigr)  +  \psi(t) \, \omega.
  $$
  Furthermore, by (\ref{notin}),  if $\omega$ is small enough, the restriction of $\mathcal{X}_{\omega}(\cdot) $ to the two intervals  $ \left[ t_1-\nu, t_1+\nu\right]$ and  $\left[ t_2-\nu, t_2+\nu\right]$        cannot intersect $\bar{\Gamma}$. By (\ref{6juilletpsi}), we infer that 
 $$
 h_l \bigl(\theta_1^{-1}(t) \bigr)  +  \omega = 0_n \quad \mbox{ for some } t \in \left[ t_1 +\nu, t_2 - \nu\right].
 $$
By Sard's Theorem (see for instance \cite{eg}), almost every value of $h_l$ is regular. In addition, if $-\omega$ is a regular value of $h_l$, then $\dot{h}_l(s) \neq 0_n$ for all $s$ such that $h_l(s)=-\omega$. This shows that if $-\omega$ is a small enough regular value of $h_l$, then $\mathcal{X}_{\omega}  \left([t_1-\nu,t_2+\nu]\right)  $ is transverse to $\bar{c}_l( I_l)$. Finally, we observe that  
\begin{eqnarray}\label{6juilletddot}
\left\{ \begin{array}{l}
\dot{\mathcal{X}}_{\omega} (t) = \dot{\mathcal{X}}  (t) +  \dot{\psi} (t   ) \, \omega \\
\ddot{\mathcal{X}}_{\omega} (t) = \ddot{\mathcal{X}}  (t) +  \ddot{\psi} (t   ) \, \omega 
\end{array}
\right.
\qquad \forall t \in [0,\tau].
\end{eqnarray}
Then taking a small enough $\omega \in \R^n$ with $\omega_1=0$ such that $-\omega$  is a regular value for all the $h_l$'s and proceeding as in Section \ref{SECwithout} provides $\tilde{\tau}= \tilde{\tau} \bigl( (x,v),(y,w)\bigr)>0$ and a triple 
\begin{multline*}
\left( \tilde{x}(\cdot), \tilde{p}(\cdot), \tilde{u}(\cdot) \right) =  \left( \tilde{x} \bigl( \cdot; (x,v), (y,w)\bigr),  \tilde{p} \bigl( \cdot; (x,v), (y,w)\bigr), \tilde{u} \bigl( \cdot; (x,v), (y,w)\bigr) \right)    \\
                    \,     : \, [0,\tilde{\tau}] \longrightarrow \R^n
\end{multline*}
satisfying (\ref{5juillet3}), (\ref{systildexp}), and (\ref{14juinMIDI}). Moreover, $\tilde{\tau}$ is given by
$$
\tilde{\tau} := \int_0^{\tau} \sqrt{ \left\langle \dot{\mathcal{X}}_{\omega} (s), \bar{G} \left(   \mathcal{X}_{\omega} (s) \right)   \, \dot{\mathcal{X}}_{\omega} (s)     \right\rangle } \, ds
$$
and for every $t\in [0,\tilde{\tau}]$,
\begin{eqnarray*}
\tilde{u}(t) & = & 2\dot{\tilde{p}} (t)  + 2  \frac{\partial \bar{H}}{\partial x}  \left(    \tilde{x} (t),  \tilde{p} (t)  \right) \\
& = & 2 \frac{d}{dt} \left\{ \bar{G} \left(     \tilde{x} (t) \right) \,  \dot{\tilde{x}} (t) \right\} + 2  \frac{\partial \bar{H}}{\partial x}  \left(    \tilde{x} (t),  \tilde{p} (t)  \right).
\end{eqnarray*}
From (\ref{6juilletddot}) and (\ref{5juillet1})-(\ref{5juillet2}), we deduce that taking $\omega$ small enough yields (\ref{6juillettau}) and (\ref{6juilletu}) for some universal constant $C=C(\tau,\rho)>0$. All in all, this shows assertion (i).

To show assertion (ii),  replace the curve $\tilde{x}(\cdot)$ (which is a reparametrization of $\mathcal{X}_{\omega}$) by a piece of unit speed geodesic (with respect to $\bar{g}$) in a neighborhood of each $t \in \left[0,\tilde{\tau}\right]$ such that $\tilde{x}(t)\in \bar{\Gamma}$ and reparametrize it as in Section \ref{SECwithout}. Let us explain briefly how to proceed. Given $\bar{t} \in \left(0,\tilde{\tau}\right)$ such that $\tilde{x}(\bar{t}) \in   \bar{\Gamma}$ and $\lambda>0$, define   $\tilde{x}_{\lambda }(\cdot) : [0,\tilde{\tau}] \rightarrow \R^n$ a small perturbation of $\tilde{x} (\cdot)$ by 
$$
\tilde{x}_{\lambda} (t) := \varphi \left( \frac{t-\bar{t}}{\lambda}\right) \, \tilde{x} (t) + \left[ 1-\varphi \left( \frac{t-\bar{t}}{\lambda}\right) \right]\, \bar{\gamma}_{\tilde{x}(\bar{t}), \dot{\tilde{x}}(\bar{t})} \bigl( t - \bar{t}\bigr) \qquad \forall t \in \left[0,\tilde{\tau}\right],
$$
where $\varphi : \R \rightarrow [0,1]$ is a smooth function satisfying
$$
\varphi(t ) =1 \quad \forall t \in (-\infty,-1] \cup [1, +\infty) \quad \mbox{ and } \quad \varphi(t)=0 \quad \forall t \in [-1/2,1/2].
$$ 
We leave the reader to check that taking $\lambda >0$ small enough yields the desired result. 
\end{proof}

Proposition \ref{PROPconnect}  follows easily from the following  result whose technical proof is postponed to Appendix \ref{PROOFLEMf}.

\begin{lemma}\label{LEMf}
There are $C= C(\tau,\rho) >0$ and a function $f:\R^n \rightarrow \R$ of class $C^{k-1}$ such that the following properties are satisfied:
\begin{itemize}
\item[(i)] $\mbox{Supp } (f) \subset \mathcal{R} (\rho)$;
\item[(ii)] $\|f  \|_{C^1} < C \, \left| (x,v) - (y,w) \right| $;
\item[(iii)] for every $t\in [0,\tilde{\tau}]$, $\nabla f\bigl( \tilde{x}(t)\bigr) = \tilde{u}(t)$;
\item[(iv)] for every $l\in \{1,\ldots, L\}$ and every $s\in I_l$, , there is $\lambda_l(s)$ such that 
$$
\nabla f\bigl( \bar{c}_l(s)\bigr) = \lambda_l (s) \bar{p}_l(s) := \lambda_l (s) \, \bar{G} \bigl(\bar{c}_l(s)\bigr) \,   \dot{\bar{c}}_l(s).
$$
\end{itemize} 
\end{lemma}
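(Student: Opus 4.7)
The aim is to refine the construction of Section \ref{SECwithout} so as to enforce the additional compatibility condition (iv) while preserving (i)-(iii). The starting point is the function $f_0$ produced in Section \ref{SECwithout} via Lemma \ref{easylemma}; it already satisfies (i), (ii), (iii) with constants depending only on $\tau,\rho$. In view of Lemma \ref{LEMreparametrization}, enforcing (iv) amounts to asking that the Riemannian gradient of $f$ be tangent to $\bar{c}_l$ at every $s\in I_l$, equivalently, that the Euclidean gradient $\nabla f(\bar{c}_l(s))$ lie in $\R \bar{p}_l(s)$, where $\bar{p}_l(s)=\bar{G}(\bar{c}_l(s))\dot{\bar{c}}_l(s)$. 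So for each $l$ one must kill the component of $\nabla f_0(\bar{c}_l(s))$ not aligned with $\bar{p}_l(s)$.

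For each $l$ such that $\bar{c}_l(I_l)\cap{\rm Supp}(f_0)\neq\emptyset$, set
\[
\lambda_l(s) := \bigl\langle \nabla f_0(\bar{c}_l(s)),\dot{\bar{c}}_l(s)\bigr\rangle,
\qquad
w_l(s) := \nabla f_0(\bar{c}_l(s)) - \lambda_l(s)\,\bar{p}_l(s).
\]
Since $\bar{c}_l$ is unit speed one has $\langle\bar{p}_l,\dot{\bar{c}}_l\rangle=|\dot{\bar{c}}_l|_{\bar{g}}^{2}=1$, so this choice gives $\langle w_l(s),\dot{\bar{c}}_l(s)\rangle=0$, which is precisely hypothesis (\ref{easylemmahyp2}) of Lemma \ref{easylemma}. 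Applying Lemma \ref{easylemma} along $y=\bar{c}_l$ with input $w=w_l$, small $\beta_l$, and a suitably chosen tube radius $\mu_l$ produces a $C^{k-1}$ function $f_l$ with $\nabla f_l(\bar{c}_l(s))=w_l(s)$ and $f_l\equiv 0$ along $\bar{c}_l$. Using the transversality and disjointness provided by Lemma \ref{LEMTHOM}, the radii $\mu_l$ can be chosen so that ${\rm Supp}(f_l)$ is disjoint from $\tilde{x}\bigl(\{t:\tilde{u}(t)\neq 0\}\bigr)$ and is contained in $\mathcal{R}(\rho)$. Setting $f:=f_0-\sum_l f_l$ then gives $\nabla f(\bar{c}_l(s))=\lambda_l(s)\bar{p}_l(s)$ for every $s$, so (iv) holds with the above $\lambda_l$, while (i) and (iii) are preserved because each $f_l$ vanishes together with its gradient on the relevant parts of $\tilde{x}$.

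The main obstacle is the uniform $C^1$-bound (ii): the constant $C$ must be independent of the number $L$ of obstacles. Lemma \ref{easylemma} gives
\[
\|f_l\|_{C^1}\leq \frac{K}{\mu_l}\,\|w_l\|_{C^0}\leq K'\,\|f_0\|_{C^1}\leq C''\,|(x,v)-(y,w)|,
\]
but a naive summation over $l$ introduces an unwanted factor $L$. The remedy is to exploit Lemma \ref{LEMTHOM}(i), possibly reinforced by a further Thom-type perturbation in the spirit of the proof of that lemma, to arrange that the arcs $\bar{c}_l\cap{\rm Supp}(f_0)$ can be surrounded by Fermi tubes whose pointwise overlap multiplicity is bounded by an integer $N=N(\tau,\rho,n)$ \emph{independent of $L$}. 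This is a Besicovitch-type covering fact, made possible by the uniform transversality of the $\bar{c}_l$'s to $\tilde{x}$ inside the thin tube around $\tilde{x}$ together with the oscillation bound $|\dot{\bar{c}}_l(s)-\dot{\bar{c}}_l(s')|<1/8$ from (\ref{Assconnect4}), which forces each obstacle arc inside the tube to be short and almost straight. Under this arrangement
\[
\Bigl\|\sum_l f_l\Bigr\|_{C^1}\leq N\,\max_l\|f_l\|_{C^1},
\]
yielding the required uniform bound $\|f\|_{C^1}<C\,|(x,v)-(y,w)|$ with $C=C(\tau,\rho)$. Finally, the $C^{k-1}$ regularity of $f$ follows from that of $\tilde{u}$ and $\nabla f_0$, and the support condition (i) follows from assumption (A), the inclusion $\bar{\gamma}([0,\tau])\subset \mathcal{R}(\rho/2)$, and the smallness of $\bar\delta$ and of the tube radii.
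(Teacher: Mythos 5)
Your approach (additive corrections) is genuinely different from the paper's (which iteratively composes $f$ with projection maps so that $f$ becomes constant along directions normal to $\bar\Gamma$ near $\bar\Gamma$), and it has a concrete gap: you never address mutual intersections of the obstacle arcs. Suppose $z=\bar c_k(s_k)=\bar c_l(s_l)$ with $\bar p_k(s_k)$ and $\bar p_l(s_l)$ not parallel. Then (iv) forces $\nabla f(z)=0$, but with $f=f_0-\sum_m f_m$ and only $f_k,f_l$ active at $z$ one gets
$$
\nabla f(z)=\nabla f_0(z)-w_k(s_k)-w_l(s_l)
=-\nabla f_0(z)+\lambda_k(s_k)\bar p_k(s_k)+\lambda_l(s_l)\bar p_l(s_l),
$$
which is generically neither zero nor a multiple of $\bar p_k(s_k)$, so (iv) fails at $z$. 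More generally, when $\bar c_l(s)$ lies inside the Fermi tube of $\bar c_m$, $\nabla f_m(\bar c_l(s))$ is not controlled (Lemma \ref{easylemma} only prescribes $\nabla f_m$ \emph{on} $\bar c_m$), so subtracting $f_m$ destroys (iv) along $\bar c_l$. The paper pre-flattens $f$ (steps 2 and 3 of its proof) so that $\nabla f$ already vanishes in small balls around the pairwise intersection points and around the points of $\tilde x\cap\bar\Gamma$, precisely to avoid this conflict; a purely additive scheme has no analogous safety net.

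There is also a quantitative issue in your bound. You write $\|f_l\|_{C^1}\leq (K/\mu_l)\|w_l\|_{C^0}\leq K'\|f_0\|_{C^1}$, but the middle inequality requires either $\mu_l$ bounded below by a universal constant or $\|w_l\|_{C^0}=O(\mu_l)$. You also require ${\rm Supp}(f_l)$ to miss $\tilde x\bigl(\{t:\tilde u(t)\neq0\}\bigr)$, which pushes $\mu_l$ toward $0$ when $\bar c_l$ crosses close to that set; and $\|w_l\|_{C^0}$ is of order $\|\nabla f_0\|_{C^0}$, not of order $\mu_l$. So the chain of inequalities as written is not justified. (Separately, applying Lemma \ref{easylemma} along $y=\bar c_l$ requires an $e_1$-normalization $|\dot{\bar c}_l-e_1|\leq 1/5$ which is not available for arbitrary obstacle directions; this is fixable by a rotation but should be said.) The paper's compositional construction sidesteps both the summation-over-$l$ and the $1/\mu$ blow-up, since a single nearest-point projection to $\bar\Gamma$ handles all obstacles simultaneously.
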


\section{Proof of Theorem \ref{THM}}\label{PROOFTHM}

Let $\gamma = \gamma_{x,v} : \R \rightarrow M$ be the geodesic starting from $x$ with velocity $v \in U_x^gM$ and $\epsilon >0$ be fixed.  Let $\tau \in (0,1/20)$ be a small enough time such that the curve $\gamma_{x,v} ( [-10\tau,10\tau])$ has no self-intersection. There exist an open neighborhood $\mathcal{U}_{x}$ of $x$ and a smooth diffeomorphism
 $$
 \theta_{x}: \mathcal{U}_{x} \longrightarrow B^n(0,1) \quad \mbox{ with } \quad \theta_x (x) =0_n \quad \mbox{ and } \quad  
\frac{d}{dt} \Bigr(  \theta_{x} \circ \gamma_{x,v}\Bigr)  (0) = e_1.
$$
Set 
$$
\bar{\gamma} (t) :=  \theta_{x} \left( \gamma_{x,v} (t) \right) \quad \forall t \in [-10\tau,10\tau]
$$
and 
$$
\bar{x}^0 := \bar{\gamma} (0) =0_n, \quad \bar{v}^0 := \dot{\bar{\gamma}} (0) = e_1, \quad \bar{x}^{\tau} := \bar{\gamma} (\tau), \quad \bar{v}^{\tau} := \dot{\bar{\gamma}} (\tau).
$$
The metric $g$ is sent, via the smooth diffeomorphism $\theta_{x}$, onto a Riemannian metric $\bar{g}$ of class $C^k$ on $B^n(0,1)$. Without loss of generality, we may assume that $\bar{g}$ is the restriction to $B^n(0,1)$ of a complete Riemannian metric of class $C^k$ defined on  $\R^n$. Denote by $\phi_t^{\bar{g}}$ the geodesic flow on $\R^n \times \R^n$. Set 
 $$
 \mathcal{H}_0 := \Bigl\{ y=(y_1,\ldots, y_n) \in \R^n \, \vert \, y_1=0  \Bigr\}.
 $$
Since $\bar{\gamma}(0)=0_n$ and $\dot{\bar{\gamma}}(0)=e_1$, taking $\tau$ smaller if necessary we may assume that  
\begin{eqnarray}\label{30juin1}
\mbox{and } \quad \left| \frac{d}{dt} \Bigr(  \theta_{x} \circ \gamma_{x,v}\Bigr)  (t) -e_1 \right| \leq 1/10 \qquad \forall t \in [0,\tau].
\end{eqnarray}
Keeping the notations of Section \ref{subconnecting}, we may also assume that there is $\rho>0$ such that the following properties are satisfied:
\begin{itemize}
\item[(i)] $\bar{\gamma} (t) \in  \mathcal{R} (\rho/2)= \Bigl\{ (t,z)  \, \vert \, t \in \left[ 0, \bar{x}_1^{\tau} \right],\, z \in B^{n-1}(0,\rho/2)  \Bigr\} \subset B^n(0,1),$
\item[(ii)] for every unit speed geodesic $\bar{c} \, : \, I= [a_1,b_1] \longrightarrow \R^n$ with $\bar{c}(I) \subset \mathcal{R} (2\rho) \subset B^n(0,1)  $, there holds
$$
 \left| \dot{\bar{c}}_l (s) - \dot{\bar{c}}_l (s') \right| < 1/8  \qquad  \forall s, s' \in I.
$$
\end{itemize}
Then, we can apply Proposition \ref{PROPconnect} to the curve $\bar{\gamma} : [0,\tau] \rightarrow \R^n$. Consequently, there are $\bar{\delta}= \bar{\delta} (\tau,\rho) \in (0,\tau/3)$ and $C= C(\tau,\rho) >0$ such that the property stated in Proposition \ref{PROPconnect} is satisfied. Define the section $\mathcal{S} \subset TM$ by 
$$
\mathcal{S} := d\theta_x^{-1} \left( \mathcal{H}_0 \times \R^n \right).
$$

Since $M$ is assumed to be compact and the geodesic flow preserves the Liouville measure, the Poincar\'e recurrence theorem implies that the geodesic flow is nonwandering on $U^gM$. Thus,  for every neighborhood $\mathcal{V}$ of $(x,v)$ in $U^gM$, there exist $t \geq 1$ and $(x',v') \in \mathcal{V}$ such that $\phi_t^g (x',v') \in \mathcal{V}$. Then, since $\gamma_{x,v}$ is transverse to $\mathcal{S}$ at time zero, for every $r>0$ small, there exist $(x^r,v^r), (x^r_*,v^r_*) \in \mathcal{S} \cap U^gM$, $T^r>0$ and $y^r, y^r_*, w^r, w^r_* \in B^n(0,1)$ such that
\begin{itemize}
\item[(a)] $(x^r_*,v^r_*) = \phi_{T^r}^g (x^r,v^r)$.
\item[(b)] $(y^r,w^r) = d\theta_x (x^r,v^r), (y_*^r,w_*^r) = d\theta_x (x_*^r,v_*^r)$;
\item[(c)] $(y^r,w^r), (y_*^r,w_*^r) \in U^{\bar{g}}\R^n$;
\item[(d)] $y^r, y^r_* \in \mathcal{H}_0$;
\item[(e)] $ \left|x -\bar{x}^0 \right|, \, \left|y -\bar{x}^0\right|, \, \left| v - \bar{v}^0 \right|, \,   \left| w - \bar{v}^0 \right| < \bar{\delta}$;
\item[(f)] $\left| (y^r,w^r) - (y_*^r,w_*^r)\right| <r$.
\end{itemize}
Recall that the cylinder $ \mathcal{R} (\rho/2)$ is defined by
$$
 \mathcal{R} (\rho/2):= \Bigl\{ (t,z)  \, \vert \, t \in \left[ 0, \bar{x}_1^{\tau} \right],\, z \in B^{n-1}(0,\rho/2)  \Bigr\} \subset B^n(0,1).
$$
The intersection of the curve $\gamma_{x^r,v^r} \left( [5\tau,T^r-5\tau]\right) $ with the open set $\theta_x^{-1} \left(  \mathcal{R} (\rho/2)\right)$ can be covered by a finite number of connected curves. More precisely, there are a finite number of unit speed geodesic arcs 
$$
\bar{c}_1 \, : \, I_1= [a_1, b_1] \longrightarrow B^n(0,1),  \quad \cdots, \quad  \bar{c}_L  \, : \, I_L=[a_L,b_L] \longrightarrow B^n(0,1)
$$ 
such that the following properties are satisfied:
\begin{itemize}
\item[(g)] For every $l\in \{1, \ldots, L\}$, $\bar{c}_l(a_l), \bar{c}_l (b_l) \in  \mathcal{R} (2\rho)  \setminus  \mathcal{R} (\rho/2)$;
\item[(h)] there are disjoint closed intervals $\mathcal{J}_1, \ldots, \mathcal{J}_L \subset  \left[ -5\tau,T^r-5\tau\right]$ such that
$$
\gamma_{x^r,v^r} (\mathcal{J}_l) \subset \mathcal{U}_{x},  \quad \bar{c}_l (I_l) = \theta_{x} \left( \gamma_{x^r,v^r}(\mathcal{J}_l)\right) \qquad \forall l=1, \ldots, L,
$$
$$
\mbox{ and } \quad \Bigl( \theta_x \left( \gamma_{x^r,v^r} \left( [5\tau,T_r-5\tau]  \right) \cap \mathcal{U}_{x} \right) \cap  \mathcal{R} (\rho/2) \Bigr) \,  \subset \,  \bigcup_{l=1}^L \bar{c}_l  ( I_l).
$$
\end{itemize}
From the above properties and (ii), we can connect $(y^r_*,w^r_*)$ to $\phi_{\tau}^{\bar{g}} (y^r,w^r)$ by preserving the curves $\bar{c}_1(I_1), \ldots, \bar{c}_L(I_L)$. We define the metric $\tilde{g}$ on $M$ by 
$$
\tilde{g} = \left\{ \begin{array}{l}
\tilde{g} \mbox{ on } M \setminus \mathcal{U}_{x}\\
\theta_{x}^* \left( e^{f}\bar{g}\right)  \mbox{ on } \mathcal{U}_{x}.
\end{array}
\right.
$$
We leave the reader to check that by construction the geodesic starting from $x^r_*$ with initial velocity $v^r_*$ is periodic. Taking $r>0$ small enough yields $d_{TM} \bigl( (x,v), (x_*^r,v_*^r)\bigr) < \epsilon$ and  $\|f\|_{C^1}<\epsilon$.

\appendix

\section{Proof of Lemmas  \ref{easylemma} and \ref{LEMf}}

\subsection{Proof of Lemma \ref{easylemma}}\label{PROOFeasylemma}

Define the function $\Phi : [0,T] \times \R^{n-1} \rightarrow \R^n$ by 
$$
\Phi (t,z)  := y( t) + (0,z) \qquad \forall \,(t,z) \in [0,T] \times \R^{n-1}.
$$
We can easily check that, thanks to (\ref{easylemmahyp0}), $\Phi$ is a diffeomorphism of class $C^k$ from $[0,T] \times \R^{n-1}$ into $\left[y_1(0),y_1(\tau)\right] \times \R^{n-1}$ which sends the cylinder
$[\beta/2,T-\beta/2] \times B^{n-1}\bigl(0,\mu\bigr)$ into the ``cylinder'' 
$$
\mathcal{C}_y (\mu) :=  \Bigl\{  y(t) + (0,z) \, \vert \, t \in [\beta/2,T-\beta/2], z \in B^{n-1}(0,\mu) \Bigr\},
$$
 and which satisfies
\begin{eqnarray*}
\|\Phi\|_{C^1}, \bigl\| \Phi^{-1}\|_{C^1} \leq K_0,
\end{eqnarray*}
for some positive constant $K_0$ depending on $T$ only. Define the function $\tilde{w} (\cdot) :[0,T] \rightarrow \R^n$ by
\begin{eqnarray*}
\tilde{w}(t) := \bigl(d\Phi \bigl(t,0_{n-1}\bigr)\bigr)^* \bigl( w(t) \bigr) \qquad \forall \,t\in [0,T].
\end{eqnarray*}
The function $\tilde{w}$ is $C^{k-1}$; in addition, by (\ref{easylemmahyp1}) and (\ref{easylemmahyp2}), it follows that
\begin{eqnarray*}
\tilde{w}(t) =0_n \qquad \forall t \in [0,\beta] \cup [T-\beta,T] \quad \mbox{ and } \quad  \tilde{w}_1(t)=0 \qquad \forall t \in [0,T].
\end{eqnarray*}
Let $\psi:\R \rightarrow [0,1]$ be an even
function of class $C^{\infty}$ satisfying the following properties:
\begin{itemize}
\item[-] $\psi(s)=1$ for $s\in [0,1/3]$;
\item[-] $\psi (s)=0$ for $s\geq 2/3$;
\item[-] $|\psi(s)|, |\psi'(s) | \leq 10$ for any  $s \in [0,+\infty)$.
\end{itemize}
Extend the function $\tilde{w}(\cdot)$ on $\R$ by $\tilde{w}(t):=0$ for $t\leq 0$ and $t \geq T$, and define the function $\tilde{W}:[0,T] \times \R^{n-1} \rightarrow \R$ by 
$$
\tilde{W} (t,z) = \psi \left( \frac{|z|}{\mu} \right) \left[  \sum_{i=2}^{n} \int_0^{z_i} \tilde{w}_{i}(t+s) ds \right] \qquad \forall\, (t,z) \in [0,T] \times \R^{n-1}.
$$
Since $\tilde{w}$ is $C^{k-1}$, $\psi$ is $C^k$, and $\tilde{W} (t,z)$ can be written as 
$$
\tilde{W} (t,z) = \psi \left( \frac{|z|}{\mu} \right) \left[  \sum_{i=2}^{n} \int_{t}^{t+z_i} \tilde{w}_{i}(t+s) ds \right], 
$$
it is easy to check that $\tilde{W}$ is of class $C^k$. Moreover, (using that $3\mu\leq \beta <T$) we check easily that
\begin{eqnarray*}
\mbox{Supp } \left(\tilde{W}\right) \subset [\beta/2,T -\beta/2] \times B^{n-1}\bigl(0,2\mu/3\bigr),
\end{eqnarray*}
\begin{eqnarray*}
\nabla \tilde{W} (t,0) = \tilde{w}(t), \quad \tilde{W}(t,0) =0  \qquad \forall t \in [0,T],
\end{eqnarray*}
and that
 (see the proof of \cite[Lemma 3.3]{frclosing1})
\begin{eqnarray*}
 \left\| \tilde{W} \right\|_{C^1} \leq  \frac{K_1}{\mu} \bigl\|\tilde{w} (\cdot)\bigr\|_{C^0},
 \end{eqnarray*}
 for some constant $K_1>0$. Finally, define the function $W:  \R^n \rightarrow \R$ by 
$$
W(x) := \left\{ 
\begin{array}{ll}
\tilde{W}\bigl( \Phi^{-1} (x)\bigr) & \mbox{ if } x \in \mathcal{C}_y (\mu)\\
0 & \mbox{ otherwise.}
\end{array}
\right.
$$
It is easy to see that $W$ satisfies (i)-(iv).

\subsection{Proof of Lemma \ref{LEMf}}\label{PROOFLEMf}
We proceed in several steps.\\

\noindent Step 1: Applying Lemma \ref{easylemma}, we get  a universal  constant $C_1= C_1(\tau,\rho)>0$ and a function $f_1:\R^n \rightarrow \R$ of class $C^{k}$ such that the following properties are satisfied:
\begin{itemize}
\item[$(i)_1$] $ \mbox{Supp } (f_1) \subset   \mathcal{R}  (2\rho/3)$;
\item[$(ii)_1$] $\bigl\| f_1\bigr\|_{C^1} < C_1 \, \left| (x,v) -(y,w) \right|$;
\item[$(iii)_1$]  $\nabla f_1\bigl( \tilde{x}(t)\bigr) =\tilde{u}(t),$ for every $t\in [0,\tilde{\tau}]$;
\item[$(iv)_1$] $f_1\bigl( \tilde{x}(t)\bigr) = 0,$ for every $t\in [0,\tau]$.
\\
\end{itemize}

\noindent Step 2:  Let $x_1, \ldots, x_N$ be a set of points in $ \mathcal{R}  (2\rho/3)$ such that 
$$
\left( \bigcup_{k, l=1, k\neq l}^L \Bigl(  \bar{c}_k(I_k) \cap \bar{c}_{l} (I_{l}) \Bigr)  \right) \cap  \, \mathcal{R}  (2\rho/3)= \Bigl\{ x_1, \ldots, x_N \Bigr\}.
$$
Note that by Lemma \ref{LEMTHOM} (ii), the set $\{x_1,\ldots, x_N\}$ does not intersect the curve $\tilde{x}\left( \mbox{Supp } (\tilde{u}(\cdot)\right)$. Let $\mu>0$ be such that the $N$ balls $B^n (x_1,2 \mu), \ldots, B^n (x_N,2\mu)$ are disjoint and do not intersect neither the curve $\tilde{x}\left( \mbox{Supp } (\tilde{u}(\cdot)\right)$ nor the boundary of $ \mathcal{R}  (2\rho/3)$. Define the $C^{k}$ function $f_2:\R^n \rightarrow \R$ by
$$
f_2(x)  := f_1 \left( \sum_{k=1}^N \left[ \psi \left( \frac{\bigl| x- x_k\bigr|}{3 \mu}\right) \, x_k + \left( 1 -  \psi \left( \frac{\bigl| x- x_k\bigr|}{3 \mu}\right)  \right) x  \right] \right) \qquad \forall x \in \R^n.
$$
By construction, there is a universal constant $C_2= C_2(\tau,\rho)>0$ such that $f_2$ satisfies the following properties:
\begin{itemize}
\item[$(i)_2$] $ \mbox{Supp } (f_2) \subset  \mathcal{R}  (2\rho/3) $;
\item[$(ii)_2$] $\bigl\| f_2\bigr\|_{C^1} < C_2 \, \left| (x,v) -(y,w) \right|$;
\item[$(iii)_2$]  $\nabla f_2\bigl( \tilde{x}(t)\bigr) =\tilde{u}(t),$ for every $t\in [0,\tilde{\tau}]$;
\item[$(iv)_2$]  $f_2\bigl( \tilde{x}(t)\bigr) = 0,$ for every $t\in [0,\tilde{\tau}]$;
\item[$(v)_2$] $f_2(x) = f_1(x)$ for every $x\in \R^n \setminus \Bigl( \bigcup_{k=1}^N B^n \bigl( x_k, 2 \mu \bigr) \Bigr)$;
\item[$(vi)_2$]Ê$\nabla f_2(x) =0$ for every $x\in \bigcup_{k=1}^N B^n \bigl( x_k, \mu \bigr).$\\
\end{itemize}

\noindent Step 3: Let $t_1, \ldots, t_K \in [0,\tau]$ be the set of times such that
 $$
 \tilde{x}\bigl( \mbox{Supp } (\tilde{u}(\cdot)\bigr) \cap \left( \bigcup_{l=1}^L \bar{c}_{l} (I_{l})\right)  = \Bigl\{ \tilde{x}  (t_k) \, \vert \, k=1, \ldots K \Bigr\}.
$$
Taking $\mu>0$ smaller if necessary, we may assume that  the balls $B^n \bigl( \tilde{x}  (t_1), 5 \mu\bigr)$, $\ldots, B^n \bigl( \tilde{x}  (t_K), 5\mu\bigr)$ are disjoint, do not intersect the boundary of $ \mathcal{R}  (\rho/2)$, and such that $\tilde{u}(t)=0$ for every $t\in [0,\tilde{\tau}]$ with $\tilde{x}(t) \in \bigcup_{k=1}^Q  B^n \bigl( \tilde{x}  (t_k), 5\mu\bigr)$ (remember Lemma \ref{LEMTHOM} (ii)). Set 
$$
\Omega :=  \bigcup_{k=1}^Q  B^n \bigl( \tilde{x}  (t_k), 2\mu\bigr).
$$
Taking $\mu>0$ smaller if necessary again, the projection (with respect to the Euclidean metric) $\mathcal{P}_0: \Omega \rightarrow \R^n$ to the set 
$$
S:= \bigcup_{k=1}^K  \Bigl( B^n \bigl( \tilde{x}  (t_k), 2\mu\bigr) \cap \tilde{x} \bigl( [0,\tilde{\tau}] \bigr)\Bigr),
$$
is of class $C^{k-1}$, has a $C^1$ norm $\bigl\| \mathcal{P}_0\bigr\|_{C^1}$ which is bounded by a universal constant, and satisfies  
$$
\mathcal{P}_0(x) = x \qquad \forall x \in S,
$$
$$
\mathcal{P}_0(x)  \in S \qquad \forall x \in \Omega,
$$
$$
\bigl| x -\mathcal{P}_0 (x) \bigr | < \frac{\mu}{2} \qquad \forall x \in \bigcup_{k=1}^K  \Bigl( B^n \bigl( \tilde{x}  (t_k), \mu/2 \bigr) \Bigr).
$$
Define the $C^{k-1}$ function $f_3: \R^n \rightarrow \R$  by
$$
f_3(x) := \left\{ \begin{array}{l}
 f_2 \Bigl( h(x) \mathcal{P}_0 (x) + \bigl( 1-  h(x) \bigr) x     \Bigr) \mbox{ if } x \in \Omega  \\
 f_2(x) \mbox{ otherwise,}
 \end{array}
 \right.
$$
where $h: \Omega \rightarrow \R$ is defined by
$$
h(x) :=  \psi \left( \sum_{q=1}^Q \frac{2\bigl| x- \tilde{x}(t_q)\bigr|}{3\mu} \right)  \qquad \forall x \in \Omega.
$$
We note that $h(x)=1$ for every $x\in \bigcup_{k=1}^K  \Bigl( B^n \bigl( \tilde{x}  (t_k), \mu/2\bigr) \Bigr)$ and $h(x)=0$ for every $x \in \Omega$ which does not belong to the set $ \bigcup_{k=1}^K  \Bigl( B^n \bigl( \tilde{x}  (t_k), \mu\bigr) \Bigr)$. Consequently, by construction, there is a universal constant $C_3=C_3(\tau,\rho) >0$ such that $f_3$ satisfies the following properties:
\begin{itemize}
\item[$(i)_3$] $\mbox{Supp } \bigl( f_3 \bigr) \subset  \mathcal{R}  (2\rho/3)  $;
\item[$(ii)_3$]Ê$\bigl\|f_3\bigr\|_{C^1} \leq C_3 \, \left| (x,v) -(y,w) \right|$;
\item[$(iii)_3$] $\nabla f_3 \bigl( \tilde{x}(t)\bigr) = \tilde{u}(t)$, for every $t\in [0,\tilde{\tau}]$;
\item[$(iv)_3$] $f_3 \bigl( \tilde{x}(t)\bigr) = 0$, for every $t\in [0,\tilde{\tau}]$;
\item[$(v)_3$] $f_3(x) = f_2(x)$ for every $x\in \R^n \setminus \Omega$;
\item[$(vi)_3$] $\nabla f_3(x) =0$ for every $x\in \bigcup_{k=1}^K B^n \bigl( \tilde{x}(t_k), \mu/2 \bigr)$.\\
\end{itemize}
 
\noindent Step 4: Denote by $d_{\bar{g}}:\R^n \times \R^n \rightarrow \R$ the Riemannian distance with respect to the Riemannian metric $\bar{g}$. Denote by $\mbox{dist}_{\bar{g}}^{\bar{\Gamma}} (\cdot)$ the distance function (with respect to $\bar{g}$) to the set $\bar{\Gamma}$. For every $\delta>0$, let $\mathcal{S}_{\delta}  \subset \mathcal{R}  (2\rho/3+\delta)  $ be the subset of $\bar{\Gamma}$ defined by 
$$
\mathcal{S}_{\delta} := \Bigl( \bar{\Gamma} \, \cap \,  \mathcal{R}  (\tau,2\rho/3+\delta) \Bigr)  \setminus \left(  \bigcup_{k=1}^N B^n \bigl( x_k, \mu/2 \bigr) \, \cup \, \bigcup_{k=1}^K B^n \bigl( \tilde{x}(t_q), \mu/4 \bigr) \right).
$$
For every $\delta, \mu>0$, we denote by $\mathcal{S}_{\delta}^{\mu}$ the open set of points whose distance (with respect to $\bar{g}$) to $\mathcal{S}_{\delta}$ is strictly less than $\mu$. There are $\delta, \mu>0$ such that  the function $\mbox{dist}_{\bar{g}}^{\bar{\Gamma}} (\cdot)$ is of class $C^k$ on $\mathcal{S}_{\delta}^{\mu}$, the projection $\mathcal{P}_{\bar{g}}^{\bar{\Gamma}} $ to $\bar{\Gamma}$ with respect to $\bar{g}$ is $C^{k-1}$ on $\mathcal{S}_{\delta}^{\mu}$, and both $\bigl\|  \mbox{dist}_{\bar{g}}^{\bar{\Gamma}} (\cdot)\bigr\|_{C^1(\mathcal{S}_{\delta}^{\mu})}, \bigl\|  \mathcal{P}_{\bar{g}}^{\bar{\Gamma}} (\cdot)\bigr\|_{C^1(\mathcal{S}_{\delta}^{\mu})}$ are bounded by a universal constant. Define the  function $f: \R^n \rightarrow \R$  by
$$
f(x) := \left\{ \begin{array}{l}
 f_3 \left(P(x)  \right) \mbox{ if } x \in \mathcal{S}_{\delta}^{\mu} \\
 f_3(x) \mbox{ otherwise,}
 \end{array}
 \right.
$$
where the mapping $P: \mathcal{S}_{\delta}^{\mu} \rightarrow \R^n$ is defined by
$$
P(x) := \psi \left(\frac{2 \mbox{dist}_{\bar{g}}^{\bar{\Gamma}}(x)}{3\mu} \right)  \mathcal{P}_{\bar{g}}^{\bar{\Gamma}} (x) + \left( 1 - \psi  \left( \frac{2\mbox{dist}_{\bar{g}}^{\bar{\Gamma}} (x)}{3\mu} \right) \right) x \qquad \forall x \in  \mathcal{S}_{\delta}^{\mu}.
$$
We leave the reader to check that if $\mu>0$ is small enough, the function $f$ is of class $C^{k-1}$ and satisfies assertions (i)-(iv) of Lemma \ref{LEMf} for some universal constant $C=C(\tau,\rho)>0$. 

\addcontentsline{toc}{section}{References}

\end{document}